\documentclass{svjour3}

\smartqed

\journalname{}

\usepackage{amssymb,amsmath,algorithm}

\usepackage{float}

\usepackage{graphicx}
\DeclareGraphicsExtensions{.eps}
\usepackage{epsfig}
\usepackage{epstopdf}
\usepackage{amsmath}
\usepackage{amssymb}
\usepackage{subfig}
\usepackage{rotating}
\usepackage{wrapfig}
\usepackage{makeidx,epsfig}
\usepackage[margin=1.0in]{geometry}
\usepackage{amssymb,amsfonts}
\usepackage{amsmath}
\usepackage{amsbsy}
\usepackage{verbatim}
\usepackage[bottom]{footmisc}
\usepackage{algorithm}
\usepackage{algorithmicx}
\usepackage[noend]{algpseudocode}
\algdef{SE}{Begin}{End}{\textbf{begin}}{\textbf{end}}
\usepackage{colortbl,xcolor,pgfplots}
\usepackage{algorithm,algorithmicx,algpseudocode}
\usepackage{xcolor}
\usepackage{makeidx}
\usepackage{capt-of}
\usepackage[T1]{fontenc}
\usepackage{amsfonts}
\usepackage{amscd}
\usepackage{amssymb}
\usepackage{tabularx}
\usepackage{amssymb,bezier,graphicx,}
\usepackage{times,amssymb}
\usepackage{graphicx}
\usepackage{color}
\usepackage{cancel}
\usepackage{fancybox}
\usepackage{fancyhdr}
\usepackage{hyperref}
\usepackage{tikz}
\usepackage{enumerate}
\usepackage{epstopdf}
\usepackage{array}
\usepackage{slashbox}
\usepackage{verbatim}
\usepackage{float}
\usepackage[colorinlistoftodos]{todonotes}
\usepackage{framed}
\usepackage{anysize}
\usepackage{pdfpages}
\usepackage{stackrel}
\usepackage{booktabs}
\usepackage[mathscr]{eucal}
\usepackage{bm}
\usepackage{multirow}
\usepackage{caption}
\usepackage{mathrsfs} % used in theorem 2
\usepackage{xcolor}
\usepackage[normalem]{ulem}
\usepackage{marginnote}

\newtheorem{prop}[theorem]{Proposition}
\newtheorem{cor}[theorem]{Corollary}

\textwidth14.5cm \topmargin-1.0cm

\newcommand\MR[1]{{\color{black} #1}}
\newcommand\Mr[1]{{\color{black} #1}}
\newcommand\MRM[1]{{\color{black} #1}}
\newcommand\mrm[1]{{\color{black} #1}}
\newcommand\doubts[1]{{\color{black} #1}}
\newcommand\JP[1]{{\color{black} #1}}
\newcommand\mr[1]{{\color{black} #1}}
\newcommand\mrd[1]{{\color{black} #1}}

\begin{document}
\title{A combinatorial optimization approach to scenario filtering in portfolio selection}

%\titlerunning{Draft: Portfolio optimization with scenario filtration}
\author{Justo Puerto, Mois\'{e}s Rodr\'{i}guez-Madrena \\ Federica Ricca, Andrea Scozzari}
\institute{$^1$ Institute of Mathematics University of Seville (IMUS), Seville, Spain, puerto@us.es\\
$^2$ Department of Statistics and OR, Universidad de Sevilla, Seville, Spain, madrena@us.es\\
$^3$ MEMOTEF, Faculty of Economics, Universit\`{a} degli Studi di Roma La Sapienza, Italy, federica.ricca@uniroma1.it\\
$^4$ Faculty of Economics, Universit\`{a} degli Studi Niccol\`{o} Cusano Roma, Italy, andrea.scozzari@unicusano.it}

\date{\today}
\maketitle

\begin{abstract}
Recent studies stressed the fact that covariance matrices computed from empirical financial time series appear to contain a high amount of noise. This makes the classical Markowitz Mean-Variance Optimization model unable to correctly evaluate the performance associated to selected portfolios. Since the Markowitz model is still one of the most used practitioner-oriented tool, several filtering methods have been proposed in the literature to fix the problem. Among them, the two most promising ones refer to the Random Matrix Theory or to the Power Mapping strategy. The basic idea of these methods is to transform the correlation matrix maintaining the Mean-Variance Optimization model. However, experimental analysis shows that these two strategies are not adequately effective when applied to real financial datasets.

In this paper we propose an alternative filtering method based on Combinatorial Optimization. We advance a new Mixed Integer Quadratic Programming model to filter those observations that may influence the performance of a portfolio in the future. We discuss the properties of this new model and we test it on some real financial datasets. We compare the out-of-sample performance of our portfolios with the one of the portfolios provided by the two above mentioned alternative strategies. We show that our method outperforms them. Although our model can be solved efficiently with standard optimization solvers the computational burden increases for large datasets. To overcome this issue we also propose a heuristic procedure that empirically showed to be both efficient and effective.

\keywords{Finance, Mean-Variance Optimization, Portfolio selection, Filtering methods, Mixed Integer Quadratic Programming.}
%\subclass{.}
\end{abstract}

\section{Introduction and motivation}\label{Intro}

The Mean-Variance Optimization (MVO) approach introduced by Markowitz in 1952 \cite{Markowitz1952} has dominated the asset allocation process for more than 50 years. The Markowitz model minimizes the variance of the return of a portfolio under the request of a fixed expected return level. In spite of its success, MVO has received many criticisms, in particular, related to the fact that returns are not Normally distributed, and the corresponding sampled correlation matrix is biased. Empirical studies have established that the distribution of speculative assets' returns tends to have \emph{fatter tails} than the Gaussian distribution (see, e.g., \cite{Fama1965,JansenVries_1991,Mandelbrot1963}). Fat tail distributions are related to high \emph{kurtosis} distributions for which extreme events, characterized by small probabilities in theory, empirically seem to occur more often than what the Normal distribution predicts. After the recent financial crisis of 2008, many investors moved their attention to how to deal with the risk associated to these extreme events.

In a recent paper by Stoyanov et al. (see \cite{Stoyanov_etal_2011} and the references therein), the authors discuss and compare some popular methods for fat tails modeling based on full distribution modeling and extreme value theory. They conclude that the best approach should be to extend the Gaussian model incorporating methods for handling fat tails, and then testing their performance on real financial datasets. In any case, there is no evidence on which is the best among all the families of fat tailed models, as there may be different families of fat tailed distributions which are statistically equivalent.
\vskip 8 pt
\noindent An alternative analysis on the effects of noise in the estimated correlation matrix in the MVO model is provided by Schafer et al. \cite{Schafer_etal_2010}. The basic idea is that the bulk of eigenvalues of the covariance matrix that are small (or even zero) may produce portfolios of stocks that have nonzero returns but extremely low or even vanishing risk; such portfolios are related to estimation errors in real data. For this reason, in \cite{Schafer_etal_2010} the authors propose two filtering techniques in order to eliminate the problem of small eigenvalues in the sampled covariance matrix. In the first technique, taken from \emph{Random Matrix Theory} (RMT) \cite{laloux_etal_2000}, after diagonalization of the correlation matrix, only the $p$ highest eigenvalues are preserved, while the other eigenvalues are set to zero. Then, the new filtered covariance matrix is obtained by using the filtered spectrum and the original eigenvectors. The second method, called \emph{Power Mapping}, transforms each element of the correlation matrix and raises its absolute value to some power $q$, while preserving the sign, thus obtaining a new matrix as input for the MVO model. The authors observe that one must pay attention to the value of $q$ that should be a function of the ratio $\frac{n}{T}$, where $n$ is the number of assets and $T$ the number of returns observations, i.e., the time horizon.
The first method was also considered in a series of previous papers \cite{PafkaKondor2002,PafkaKondor2003,PafkaKondor2004} where the authors apply a filter to the covariance matrix based on the RMT approach and introduce a measure defined as the ratio between the risk associated to an empirical portfolio and the risk associated with the \emph{true} optimal portfolio. In the above mentioned papers, the model minimizes the return's variance without the constraint on the fixed expected return level. The true portfolio was formed for the special case both when the true covariance matrix is equal to the unit matrix, and when it is randomly generated. Hence, the performance of noise filtering techniques is tested in a controlled setting.

\noindent From a risk management standpoint, natural quantities to estimate are the \emph{realized} (or out-of-sample) return and risk of a portfolio. Actually, the novelty in \cite{Schafer_etal_2010} is that the authors compare the two proposed filtering techniques applied to the MVO model with and without short-sellings, both in a Monte-Carlo simulation framework and, for the first time, on a real dataset. However, as the authors write, in a simulation framework good realized return and risk are observed, while in a real setting and with short-selling not allowed the improvement of the two filtering methods are not so evident w.r.t. the non-filtered MVO model (see \cite{Schafer_etal_2010} pages 116-117 Section 5.2.2). Moreover, they note experimentally that the power method approach is quite sensitive to $q$, and thus to the ratio $\frac{n}{T}$, and they find that a good value for $q$ should be approximatively equal to $\sqrt{\frac{n}{T}}$. The poor behavior of the RMT approach, in particular for the no short-selling case, is also confirmed in the paper \cite{ElKaroui2013}. In fact, the author observes that, in this case, naive estimates of risk are very close to realized risk, and no short-selling mitigates the problems due to biased sampled covariance matrices.

However, it is crucial to have good estimates for the correlation between stocks to reduce errors in the estimation of the portfolio return and risk. In this regard, a number of alternative approaches emerged in the literature in order to suppress noise in the data. These procedures include single- and multi-factor models, (for a review, see, e.g., \cite{EltGru1995} and \cite{EltGruSpi2006}) and Bayesian estimators (see \cite{Jorion1986,LedoitWolf2003}). Alternatively, some authors propose to construct a portfolio by
solving the classical Markowitz model in which the original correlation matrix is replaced by a correlation based clustering ultrametric matrix with the aim of providing portfolios that are quite robust with respect to measurement noise due to the finiteness of the sample size (see, among others, \cite{Onnela_et_al_2004,PuerMadSco2020,Tola_et_al_2008,WattsStrogatz_1998} and the references therein).
In the financial literature, different mathematical models have also been proposed in order to mitigate the effects of extreme events, which are based on the minimization of \emph{downside risk} measures. These measures are, in fact, functions of skeweness and kurtosis of the returns distributions. CVaR \cite{RockafellarUryasev_2002} is an example of such downside risk measures since it takes into account the entire part of the tail that is being observed.

To summarize, at the moment, although numerous alternatives to the MVO model have appeared in the literature in order to cope with biased return distributions when managing historical data, no definitive clear leader emerged up to now. Hence, the MVO model is still one of the most used practitioner-oriented tool.

\vskip 8 pt
\noindent In this paper we put ourselves from the practitioner point of view with the aim of applying the MVO model to obtain a portfolio from real financial datasets with a good out-of-sample performance. We propose a new filtering technique based on Combinatorial Optimization. In \cite{Stoyanov_etal_2011} the authors state that there is no fundamental theory that can suggest a distributional model for extreme events and the problem remains largely a statistical one. However, a financial market is characterized by some \emph{stylized facts} that, among others, are \cite{Stoyanov_etal_2011}: i) large price changes tend to be followed by large price changes and small price changes tend to be followed by small price changes; ii) price changes depend on price changes in the past, e.g. positive price changes tend to be followed by positive price changes.
These facts characterizing the markets can be exploited when selecting a portfolio. In particular, since due to shock periods, extreme events are unpredictable and few past extreme events can affect the portfolio out-of-sample performance too much, our approach is neglecting some (extremely rare) observations of the returns distributions in order to eliminate their influence on the performance of a portfolio in the future. Consider for example a long-only fund manager, the basic idea is that these shocks observed in the past can be considered as \emph{outliers} that can be eliminated since, in the long period, markets tend to replicate their typical behavior.

\noindent In this paper we propose a refinement of the MVO model that incorporates some additional variables that allows the model to eliminate observations (outliers) if this produces a variance decrease. The idea is that portfolios with good out-of-sample performance can be obtained in this way.

\vskip 8 pt
\noindent We compare the out-of-sample performance of our model to the two filtered MVO models presented in \cite{Schafer_etal_2010} testing them on some real financial datasets. We apply a rolling time window scheme and evaluate the future performance of our portfolios with the filtered ones not only in terms of realized return and risk, but also by considering the Sharpe ratio, which is a commonly used performance measure for comparisons purposes. The results are encouraging and, with our model, we obtain better performance results than those of the two filtering methods. This in spite of the computational effort required by our model which is a Mixed Integer Quadratic Programming (MIQP) problem. We show that standard solvers are able to find solutions in reasonable times at least for small and medium size financial dataset. For larger financial datasets we can still apply our approach by proposing a heuristic procedure that, from a computational viewpoint, have proven to be very efficient in practice.

\noindent The paper is organized as follows. Section 2 describes the two filtered MVO models presented in the literature based on the Random Matrix Theory and the Power Mapping techniques. In Section 3 we introduce our Scenario Filtering approach and formulate our general filtered MVO model as a nonlinear programming problem. We also discuss some complexity issues of this problem. In Section 3.1 we show how to reformulate it as a MIQP problem easy to solve with standard optimization software at least for small and medium size financial datasets. Also in this section, we derive a set of valid inequalities for the quadratic problem. A heuristic approach for solving large size instances is proposed in Section 3.2. Section 4 presents an extensive experimental analysis where we compare the out-of-sample performance of all the approaches on four real-world financial datasets. Finally, conclusions are depicted.

\section{The MVO filtering models: Notation and definitions}
\label{s:1}

The Mean-Variance Markowitz problem \cite{Markowitz1952} is a classic portfolio optimization problem in finance, where, given an amount of money, the aim is to select a portfolio of $n$ assets through two criteria: the expected return and the risk due to the variability of the returns. In the standard framework, the risk is measured by the variance of the portfolio returns. In the ideal case, the covariance and the mean of the returns are known, and the problem is: finding the proportion $x_j$ of capital invested in each asset $j$, $j=1,\ldots,n$, in a stock market in order to minimize the variance of the portfolio for a required level of expected return $\mu_0$. Let $\mu_j$ be the expected return of asset $j$, and $\sigma_{ij}$ be the covariance of returns of asset $i$ and asset $j$. The Markowitz model is formulated as the following convex quadratic program:

\begin{align}
\min & \,\,\,\,\, \sum_{i=1}^n \sum_{j=1}^n x_i x_j  \sigma_{ij}  \nonumber \label{MVO} \tag{\Mr{Markowitz}}\\
\mbox{ s.t. } \nonumber\\
&  \,\,\,\,\, \sum_{j=1}^n \mu_j x_j\geq  \mu_0  \noindent \nonumber\\
&  \,\,\,\,\, \sum_{j=1}^n x_j = 1  \noindent \nonumber\\
&  \,\,\,\,\, x_j\geq 0 \quad j=1,\ldots,n.   \noindent \nonumber
\end{align}

\noindent The above model refers to the case where short-selling is not allowed in the optimization, i.e. the fractions $x_j$ must be nonnegative. As observed in \cite{Schafer_etal_2010}, imposing this constraint has the direct and rather limiting consequence that positive correlations between assets' returns cannot be used to reduce the portfolio risk. As we will see later, this may limit the potential for lowering the risk through the application of noise reduction methods based on spectral filtering. When short-selling is allowed, the fractions $x_j$ are not restricted to be nonnegative. In any case, we observe that short-selling models are considered rather unrealistic in the specialized literature, as pointed out in \cite{Kondor_etal_2007}, for legal and liquidity reasons.

\vskip 8 pt

\noindent In a real financial market, we have $T+1$ different observations (scenarios) for the prices of $n$ given assets. Let $P_{jt}$ be the price of asset $j$, $j=1,\ldots,n$, at time $t$, $t=0,1,\ldots,T$. For a financial risk manager it is of crucial interest to have good estimates for the returns and correlations between stocks. Following \cite{Schafer_etal_2010}, we compute the $T$ rates of return of asset $j$, $r_{jt}$, as

$$r_{jt}=\frac{P_{jt}-P_{jt-1}}{P_{jt-1}},\quad j=1,\ldots,n,\quad t=1,\ldots,T.$$

\noindent We assume that, for each asset $j$, the observed return $r_{jt}$ has associated a probability $p_t$, $t=1,\ldots,T$, with $\sum_{t=1}^T p_t =1$. Under the hypothesis of no further information, we assume $p_t=\frac{1}{T}$, $t=1\ldots,T$. The average rate of return of asset $j$ is $\mu_j=\sum_{t=1}^T p_t r_{jt}$, $j=1,\ldots,n$. We estimate the covariance $\sigma_{ij}$ between the rate of returns of assets $i$ and $j$ by computing

$$\sigma_{ij}=\frac{1}{T}\sum_{t=1}^T (r_{it}-\mu_i)(r_{jt}-\mu_j), \quad i,j=1,\ldots,n.$$

\noindent The correlation matrix $C$ is given by $C_{ij}=\frac{\sigma_{ij}}{\sigma_{i}\sigma_{j}}$, $i,j=1,\ldots,n$, where the volatility of asset $j$ is measured by $\sigma_{j}=+\sqrt{\sigma_{jj}}$, $j=1,\ldots,n$. The rate of return at time $t$ of a portfolio $x=(x_1,\ldots,x_n)$ is

$$ y_t(x) = \sum_{j=1}^n r_{jt} x_j,\quad\quad t=1,\ldots,T,$$

\noindent the expected portfolio rate of return is

$$\mu(x) = \sum_{t=1}^T p_t y_t(x) = \sum_{j=1}^n \mu_{j} x_j,$$

\noindent and the portfolio risk can be measured by

$$V(x) = \sum_{i=1}^n \sum_{j=1}^n x_i x_j  \sigma_{ij}  = \sum_{i=1}^n \sum_{j=1}^n x_i x_j  \sigma_i \sigma_j C_{ij}.$$

\noindent It is important to note that the correlation matrix depends on the time horizon $T$ and, for a finite $T$, the correlations obtained from historical data are affected by a considerable amount of noise, which leads to a substantial error in the estimation of the portfolio risk \cite{Fama1965,JansenVries_1991,Mandelbrot1963,Schafer_etal_2010}.

\vskip 8 pt
\noindent The Random Matrix Theory introduced by \cite{laloux_etal_2000} is helpful to identify the noise in correlation matrices, and also shows a way to reduce this noise. Actually, the idea is that, after diagonalization of the correlation matrix $C = U^{-1}\Lambda U$, only the $p$ highest eigenvalues in the diagonal of $\Lambda$ must be considered, while the remaining ones are set to zero. Let $\Lambda^f$ be the filtered eigenvalues diagonal matrix and $U$ be the original eigenvector matrix. Then, the new filtered correlation matrix $C^f$ is computed

$$C^f=U^{-1}\Lambda^fU.$$

\noindent The normalization of the elements on the diagonal of $C^f$ to 1 is restored by setting $C_{jj}^f=1$ for all $j=1,\ldots,n$. This method is capable of removing the noise for uncorrelated assets completely (see \cite{Schafer_etal_2010} for a complete description of the method). Finally, the MVO model filtered according to the RMT becomes:

\begin{align}
\min & \,\,\,\,\, \sum_{i=1}^n \sum_{j=1}^n x_i x_j\sigma_{i}\sigma_{j} C_{ij}^f \nonumber \label{RMT-MVO} \tag{RMT}\\
\mbox{ s.t. }\nonumber \\
&  \,\,\,\,\, \sum_{j=1}^n \mu_j x_j\geq  \mu_0  \noindent \nonumber\\
&  \,\,\,\,\, \sum_{j=1}^n x_j = 1  \noindent \nonumber\\
&  \,\,\,\,\, x_j\geq 0 \quad j=1,\ldots,n.   \noindent \nonumber
\end{align}

\vskip 8 pt
\noindent The weakness of the RMT method is that it cuts off all the information contained in the discarded eigenvalues. An alternative  method developed in order to reduce noise was introduced in \cite{GuhrKal_2003}. It is the so-called Power Mapping technique. It takes each element of the correlation matrix and raises its absolute value to some power $q$, while preserving the sign. With this method one obtains a new correlation matrix whose elements are:

$$C_{ij}^{(q)}=\text{\Mr{sign}}(C_{ij})|C_{ij}|^q.$$

\noindent The idea behind this method is that the effect of the noise, which typically arises in the small correlations, can be broken in this way, with an effect which is stronger as the value of $q$ increases. Thus, the problem is to choose the right value for $q$ also taking into account that a byproduct effect of $q$ is produced an all correlations. The MVO model filtered according to the Power Mapping is then formulated as follows:

\MR{
\begin{align}
\min & \,\,\,\,\, \sum_{i=1}^n \sum_{j=1}^n x_i x_j\sigma_{i}\sigma_{j} C_{ij}^{(q)} \nonumber \label{PM-MVO} \tag{Power Mapping}\\
\mbox{ s.t. }\nonumber \\
&  \,\,\,\,\, \sum_{j=1}^n \mu_j x_j\geq  \mu_0  \noindent \nonumber\\
&  \,\,\,\,\, \sum_{j=1}^n x_j = 1  \noindent \nonumber\\
&  \,\,\,\,\, x_j\geq 0 \quad j=1,\ldots,n.   \noindent \nonumber
\end{align}
}

\vskip 12 pt
\noindent The drawback of the two above methods is that, generally, to reduce noise, they, in fact, eliminate too much information contained in the observed time series of the assets prices, and this inevitably affects the estimation of the portfolio return and risk.
\vskip 8 pt

\section{The Scenario Filtering approach}
\label{s:2}

\noindent In this paper we propose an alternative refinement of the MVO model that incorporates some additional constraints that allows the model to eliminate observations (outliers) in order to obtain a good estimation of return and risk. The idea is that, in particular in the long run, few extreme events in the distribution of assets prices could extremely affect the volatility and performance of a portfolio in the future. With our method we maintain the bulk of the data while dropping only those extreme observations caused by a distribution with fat tails or more simply by measurement errors.

%More precisely, we want to integrate in the MVO model the filtering (elimination) of a fixed number $K$ of the $T$ observed scenarios for the assets' rate of returns, with $K < T$. To perform the selection of the filtered scenarios, for each $t=1,\ldots,T$, we introduce the following binary variables:

\vskip 8 pt
\MR{

\noindent More precisely, we want to model the problem of filtering (eliminating) a fixed number $K$ of the $T$ observed scenarios for the assets rate of returns, $K < T$, while simultaneously solving the MVO problem. \mrd{We refer to this problem as Problem (P).} The selection of the filtered scenarios can be modeled with binary variables:

$$z_{t} = \begin{cases} 1 \qquad \hbox{if scenario $t$ is filtered}\\ 0 \qquad \hbox{otherwise}\end{cases} \quad t=1,\ldots, T.$$

\noindent Under the equiprobability assumption, for a filtering realization $z=(z_1,\ldots,z_T)$ of $K<T$ scenarios, the return distribution of a portfolio $x=(x_1,\ldots,x_n)$ is $y_t(x)$ with probability $\tilde{q}_t(z_t)=\frac{1}{T-K}(1-z_t)$. Then, for each $t=1,\ldots,T$ we denote by $q_t = \frac{1}{T-K}$ and, recalling that $y_t(x)=\sum\limits_{j=1}^nr_{jt}x_j$, the \emph{filtered expected return} of the portfolio is

\begin{equation}\label{mutilde}
\tilde{\mu}(x,z) = \sum_{t=1}^T \tilde{q}_t(z_t) y_t(x) = \sum_{t=1}^T \sum_{j=1}^n q_t r_{jt} x_j (1-z_t),
\end{equation}

\noindent and the corresponding \emph{filtered variance} is

\begin{equation}\label{Vtilde}
\tilde{V}(x,z) = \sum_{t=1}^T \tilde{q}_t(z_t) (y_t(x)-\tilde{\mu}(x,z))^2 = \sum_{t=1}^T q_t (y_t(x)-\tilde{\mu}(x,z))^2 (1-z_t).
\end{equation}

\noindent \mrd{Problem (P)} can be stated as:

\begin{align}
\min & \,\,\,\,\, \tilde{V}(x,z) \nonumber \label{P} \tag{P}\\
\mbox{ s.t. } \nonumber\\
&  \,\,\,\,\, \tilde{\mu}(x,z)\geq  \mu_0  \noindent \nonumber\\
&  \,\,\,\,\, \sum_{j=1}^n x_j = 1  \noindent \nonumber\\
&  \,\,\,\,\, \sum_{t=1}^T z_t = K  \noindent \nonumber\\
&  \,\,\,\,\, x_j\geq 0 \quad j=1,\ldots,n   \noindent \nonumber\\
&  \,\,\,\,\, z_t\in\{0,1\} \quad t=1,\ldots,T. \noindent \nonumber
\end{align}

}

\noindent Problem (\ref{P}) is a Quadratically Constrained Mixed Integer Nonlinear Programming problem that falls into the
class of considerably difficult NP-hard problems \cite{FlouVis_1995}. We also show that testing feasibility of problem (\ref{P}) is NP-hard in two cases: (i) when we require that the portfolio expected return be exactly equal to $\mu_0$ (see e.g. \cite{CeScoTar_2013,Schafer_etal_2010}); (ii) when we add \emph{threshold constraints} on investment $\ell_j \leq x_j \leq u_j$, $j=1,\ldots,n$, which impose limitations to the assets shares in the portfolio (see \cite{ManOgrySpe_2007}). Indeed, we note that  in this second case we only need to assume upper bounds $u_j > 0$, $j=1,\ldots,n$.

We prove these complexity results via a polynomial reduction to the \emph{Partition} problem that can be stated as follows: Given a finite set $A = \{a_1,\ldots,a_n\}$ with $a_i\in \mathbb{Z}_+$ for all $i\in I=\{1,\ldots,n\}$, the Partition problem asks for the existence of an index subset $I'\subseteq I$ such that 
$$\sum_{i\in I'} a_i = \sum_{i\in I\setminus I'}a_i.$$

\noindent The Partition problem is NP-complete and remains NP-complete even if we require $|I'|=\frac{n}{2}$ (see \cite{GareyJohnson1979}).

\mrm{
\begin{prop}\label{NP-hardness1}
Testing feasibility of \mrd{Problem \eqref{P}} is NP-hard when $\tilde{\mu}(x,z)=\mu_0$ is required.
\end{prop}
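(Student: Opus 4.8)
The plan is to reduce from Partition.The plan is to build a polynomial-time reduction from the cardinality-constrained version of Partition to the feasibility problem of (\ref{P}). Given a Partition instance $A=\{a_1,\ldots,a_m\}$ together with the requirement $|I'|=m/2$ (which, as recalled above, is still NP-complete), I would construct an instance of (\ref{P}) with a single asset, $n=1$, and $T=m$ scenarios, setting the return of the unique asset in scenario $t$ equal to $r_{1t}=a_t$. I would then fix the number of filtered scenarios to $K=m/2$ and set the required return level to the overall average $\mu_0=\frac{1}{m}\sum_{t=1}^m a_t$. (Here I use $m$ for the number of Partition elements to avoid a clash with the asset index $n$, which is set to $1$.)

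First I would observe that with $n=1$ the constraints $\sum_j x_j=1$ and $x_j\ge 0$ force $x_1=1$, so the portfolio part collapses and only the binary filtering variables $z=(z_1,\ldots,z_T)$ remain free. Under this choice the filtered expected return in \eqref{mutilde} reduces to $\tilde{\mu}(x,z)=\frac{1}{T-K}\sum_{t:\,z_t=0}a_t$, while the cardinality constraint $\sum_t z_t=K=m/2$ forces exactly $T-K=m/2$ scenarios to be retained. Writing $I'=\{t:z_t=0\}$ for the set of kept scenarios and $S=\sum_t a_t$, the equality constraint $\tilde{\mu}(x,z)=\mu_0$ becomes $\sum_{t\in I'}a_t=\frac{m/2}{m}\,S=\frac{S}{2}$, together with $|I'|=m/2$.

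The core of the argument is the resulting equivalence. A feasible point of (\ref{P}) yields, through $I'=\{t:z_t=0\}$, an index subset of cardinality $m/2$ summing to $S/2$; conversely, any balanced partition set $I'$ produces a feasible $z$ (set $z_t=0$ on $I'$ and $z_t=1$ elsewhere) together with $x_1=1$. Hence (\ref{P}) is feasible if and only if the Partition instance admits a solution, and since the construction is clearly polynomial in the input size, NP-hardness of the feasibility test follows.

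The point that I expect to require the most care, and which pins down why the hypothesis of the proposition is needed, is the role of the equality $\tilde{\mu}(x,z)=\mu_0$. If the constraint were the inequality $\tilde{\mu}(x,z)\ge\mu_0$, then retaining the $m/2$ largest returns would satisfy it trivially and feasibility would be easy; it is precisely the requirement of reaching $S/2$ \emph{exactly} that encodes the balancing condition of Partition. In the write-up I would therefore stress that the equality turns an otherwise easy selection into an exact subset-sum-with-cardinality condition, and I would explicitly note that the variance objective $\tilde{V}(x,z)$ plays no role here, since feasibility depends only on the constraint set, so no degenerate solution can slip in through the objective.
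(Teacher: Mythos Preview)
Your proof is correct and follows essentially the same reduction as the paper: both encode a cardinality-constrained Partition instance into the scenarios by setting $r_{jt}=a_t$, so that the equality $\tilde{\mu}(x,z)=\mu_0$ becomes precisely the balancing condition $\sum_{t\in I'}a_t=S/2$ with $|I'|=T/2$. The only cosmetic difference is that the paper keeps $n$ assets (all with identical per-scenario returns, so the portfolio weights are irrelevant) whereas you set $n=1$ outright, which is arguably cleaner but changes nothing in substance.
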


\begin{proof}
In Problem \eqref{P}, set $T=n$, $K=\frac{n}{2}$, $\mu_0=\frac{1}{n}\sum_{i=1}^n a_i$, and the assets rate of returns $r_{jt} = a_t$ for all $j,t\in I=\{1,\ldots,n\}$. Then, the rate of return at time $t\in I$ of any feasible portfolio $x$ is
$$ y_t(x) = \sum_{j=1}^n a_{t} x_j = a_t,$$
\noindent and the corresponding filtered expected return becomes
$$\tilde{\mu}(x,z)  = \frac{2}{n} \sum_{t=1}^T a_t (1-z_t).$$
\vskip 8 pt
\noindent In Problem \eqref{P} we assume that, given a feasible solution, if $z_t=1$ then $t\in I'\subset I$, otherwise $t\in I\setminus I'$. Furthermore, we also observe that when in the Partition problem $|I'|=\frac{n}{2}$ is required, it implies that

$$\sum_{i\in I'} a_i = \sum_{i\in I\setminus I'}a_i=\frac{\sum_{i\in I} a_i}{2}.$$ 

\noindent Hence, the answer to the Partition problem is ``yes'' if and only if Problem \eqref{P} is feasible.
\qed
\end{proof}

\begin{prop}\label{NP-hardness2}
Testing feasibility of \mrd{Problem \eqref{P}} is NP-hard when  $x_j\leq u_j$, with $u_j>0$, $j=1,\ldots,n$, is required.
\end{prop}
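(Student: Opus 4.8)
The plan is to reduce from the same Partition instance used in Proposition~\ref{NP-hardness1}, again in its $|I'|=\frac{n}{2}$ form, but to keep the inequality $\tilde{\mu}(x,z)\ge\mu_0$ exactly as it appears in \eqref{P} and to transfer the combinatorial difficulty onto the newly added upper bounds $x_j\le u_j$. The guiding idea is that in Proposition~\ref{NP-hardness1} balance was enforced by the two-sided condition $\tilde{\mu}=\mu_0$; here I want the bounds to supply, by themselves, the missing reverse inequality $\tilde{\mu}\le\mu_0$, so that the model's own constraint $\tilde{\mu}\ge\mu_0$ can be met only with equality, and equality will again correspond to a balanced partition.

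Concretely, I would take $T=n$ and $K=\frac{n}{2}$, identify scenarios with the partition elements as before (so that $z_t=1$ encodes $t\in I'$ and $\sum_t z_t=K$ forces $|I'|=\frac{n}{2}$), and set $\mu_0=\frac{1}{n}\sum_{i=1}^n a_i$. Unlike in Proposition~\ref{NP-hardness1}, the returns $r_{jt}$ must now genuinely depend on the asset $j$: if they did not, then $\tilde{\mu}(x,z)$ would be independent of $x$ and the bounds could not influence feasibility at all. I would therefore pick an asset-dependent return pattern together with tight upper bounds (for instance $u_j=\frac{2}{n}$, so that the total investable capacity of any $\frac{n}{2}$ assets equals the budget $\sum_j x_j=1$) calibrated so that, after optimizing the portfolio, the best attainable filtered return $\bar{\mu}(z):=\max\{\tilde{\mu}(x,z):\sum_j x_j=1,\ 0\le x_j\le u_j\}$ is capped at $\mu_0$ for every admissible filtering $z$.

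With such a construction the two implications run as in Proposition~\ref{NP-hardness1}. If the Partition instance answers ``yes'', I exhibit a balanced $z$ (the kept scenarios summing to $\frac{S}{2}$, where $S=\sum_i a_i$) together with an explicit portfolio---typically the uniform allocation permitted by $u_j=\frac{2}{n}$---that attains $\tilde{\mu}=\mu_0$, certifying feasibility of \eqref{P}. Conversely, any feasible $(x,z)$ satisfies $\mu_0\le\tilde{\mu}(x,z)\le\bar{\mu}(z)\le\mu_0$, hence $\tilde{\mu}(x,z)=\mu_0$; the calibration of $u_j$ is chosen precisely so that this equality can hold only when the kept scenarios are balanced, which yields a ``yes'' answer for Partition. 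Since the whole reduction is clearly polynomial in $n$ and the $a_i$, NP-hardness follows.

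The hard part is establishing the reverse inequality $\bar{\mu}(z)\le\mu_0$ uniformly in $z$, with equality exactly at balanced partitions. Because $\tilde{\mu}(x,z)=\sum_j x_j\big(\sum_t q_t r_{jt}(1-z_t)\big)$ is linear in $x$ for fixed $z$, computing $\bar{\mu}(z)$ is a continuous-knapsack problem over the box-simplex $\{\sum_j x_j=1,\ 0\le x_j\le u_j\}$; the genuine difficulty is to design the return pattern $r_{jt}$ and the bounds $u_j$ so that the resulting value, now viewed as a function of the binary filtering $z$, is maximized over $\{z:\sum_t z_t=\frac{n}{2}\}$ precisely at the balanced configurations and nowhere exceeds $\mu_0$. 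This is the step that makes the upper bounds, rather than an artificially imposed equality, carry the encoding, and it is also where one verifies the stronger assertion in the statement that upper bounds $u_j>0$ alone suffice, with no lower bounds $\ell_j$ needed.
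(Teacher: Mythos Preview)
Your high-level strategy matches the paper's: force the inequality $\tilde\mu(x,z)\ge\mu_0$ to bind by using the upper bounds to cap $\bar\mu(z)=\max_x\tilde\mu(x,z)$ at $\mu_0$, so that feasibility is equivalent to the existence of a balanced half-set. However, your proposal stops exactly at the point that constitutes the proof: you never exhibit a concrete choice of $r_{jt}$ and $u_j$ for which $\bar\mu(z)\le\mu_0$ holds for every admissible $z$ with equality precisely at balanced partitions. You even flag this as ``the hard part'' and leave it open. As it stands the reduction is not a reduction; it is a specification of what a reduction would have to achieve.

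The paper fills this gap with a very specific construction that is worth contrasting with your sketch. First, it rescales Partition so that $\sum_i a_i=2$ and takes the bounds to be \emph{instance-dependent}, $u_j=a_j$ (not a uniform $2/n$). Second, it chooses returns so that the filtered mean of asset $j$ depends only on whether scenario $j$ is filtered: $r_{jt}=1/a_j$ for $t\neq j$ and $r_{jj}=-(K-1)/a_j-K\varepsilon$, giving $\tilde\mu_j(z)=1/a_j$ if $z_j=1$ and $\tilde\mu_j(z)=-\varepsilon$ if $z_j=0$. Then $\tilde\mu(x,z)=\sum_{j:z_j=1}x_j/a_j-\varepsilon\sum_{j:z_j=0}x_j$, and since $x_j\le a_j$ each positive term is at most $1$, so $\tilde\mu(x,z)\le K=\mu_0$ for every feasible $(x,z)$; equality forces $x_j=a_j$ for all $j$ with $z_j=1$ and $x_j=0$ otherwise, whence $\sum_{j:z_j=1}a_j=1$, i.e.\ a balanced partition. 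Note how the coupling $u_j=a_j$ with the reciprocal returns $1/a_j$ makes each selected asset contribute exactly $1$ at its cap, which is what makes the bound $\bar\mu(z)\le K$ automatic and tight. Your tentative uniform bound $u_j=2/n$ carries no instance information, so all the encoding would have to go into $r_{jt}$ in a way you have not specified; absent that specification, the argument is incomplete.
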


\begin{proof}
We refer to a modified Partition problem, that is, we assume $A\subseteq \mathbb{Q}_+$ and $\sum_{i=1}^n a_i = 2$. This assumption implies that if $a_i>1$ for some $i\in I$, then the answer to the problem is trivially ``no''. Thus, we assume $a_i\leq 1$ for all $i\in I$. We observe that this modified problem is equivalent to the (standard) Partition problem if we multiply the numbers in the set $A$ by a proper factor. Hence, the modified Partition problem is NP-complete, as well.

\noindent In Problem \eqref{P}, set $T=n$, $K=\frac{n}{2}$, $\mu_0=\frac{n}{2}$, $u_j = a_j$ for all $j=1,\ldots,n$, and the assets rate of returns $r_{jt} = \frac{1}{a_j}$ if $t\neq j$, and $r_{jt} = -(K-1)\frac{1}{a_j}-K\varepsilon$ if $t = j$, for all $j,t\in I=\{1,\ldots,n\}$ and any $\varepsilon > 0$. Then, the filtered average rate of return of asset $j\in I$ is

$$\tilde{\mu}_j(z)=\frac{1}{K}\sum_{t=1}^T r_{jt} (1-z_t).$$

Note that $\tilde{\mu}_j(z)=\frac{1}{a_j}$ if $z_j = 1$, and $\tilde{\mu}_j(z)=-\varepsilon$ if $z_j = 0$. Thus, it follows that
$\tilde{\mu}_j(z)= \frac{1}{a_j} z_j -\varepsilon (1-z_j)$ for all $j\in I$. Hence, the filtered expected return of the portfolio can be expressed as

$$\tilde{\mu}(x,z) = \sum_{j=1}^n \tilde{\mu}_j(z) x_j = \sum_{j=1}^n  \frac{1}{a_j} z_j x_j -\varepsilon \sum_{j=1}^n  (1-z_j) x_j.$$

Finally, the feasible region of Problem \eqref{P} becomes
\begin{align}
&  \,\,\,\,\, \sum_{j=1}^n  \frac{1}{a_j} z_j x_j -\varepsilon \sum_{j=1}^n  (1-z_j) x_j\geq  K  \noindent \nonumber\\
&  \,\,\,\,\, \sum_{j=1}^n x_j = 1  \noindent \nonumber\\
&  \,\,\,\,\, \sum_{j=1}^n z_j = K  \noindent \nonumber\\
&  \,\,\,\,\, 0\leq x_j\leq a_j \quad j=1,\ldots,n   \noindent \nonumber\\
&  \,\,\,\,\, z_j\in\{0,1\} \quad j=1,\ldots,n. \noindent \nonumber
\end{align}

\noindent Therefore, with the same reasoning of the previous proposition, the answer to the (modified) Partition problem is ``yes'' if and only if Problem \eqref{P} is feasible.
\qed
\end{proof}
}

\noindent In the light of the above propositions, it is important to provide efficient solution methods to solve \mrd{Problem \eqref{P}}. In the following, we propose both an exact method and a heuristic procedure.

%\mrm{We remark that, since these methods are based on Mathematical Programming models, their adaptation to the requirements in Proposition \ref{NP-hardness1}, Proposition \ref{NP-hardness2} or any other similar real feature is straightforward.}

\subsection{The Mixed Integer Quadratic Programming model}
\label{s:21}

\noindent In this Section we present a reformulation of Problem \eqref{P} as a MIQP problem that, even if it still belongs to the class of difficult NP-hard problems, it can be efficiently solved by using some commercial or free optimization solvers for general MIQP models.

\vskip 8 pt
\noindent We consider the filtered variance of the portfolio return \eqref{Vtilde}:

$$ \tilde{V}(x,z) = \sum_{t=1}^T q_t |y_t(x)-\tilde{\mu}(x,z)|^2 (1-z_t). $$

\noindent Since in Problem (\ref{P}) we minimize the filtered variance we can apply the McCormick linearization \cite{McCormick1976} to the products of squared absolute values and $(1-z_t)$ terms. Therefore, the filtered variance can be equivalently written as

\begin{equation}\label{Var_d}
\sum_{t=1}^T q_t d_t^2
\end{equation}

\noindent whenever the following set of constraints is satisfied

\begin{align}
&  \,\,\,\,\, d_{t}\geq |y_t(x)-\tilde{\mu}(x,z)|-z_t M_t \quad t=1,\ldots,T   \noindent \nonumber
\end{align}

\noindent for nonnegative variables $d_t\geq 0$ and big enough constants $M_t>0$, $t=1,\ldots,T$. \Mr{In addition,} since $|y_t(x)-\tilde{\mu}(x,z)|= \max\{y_t(x)-\tilde{\mu}(x,z), -(y_t(x)-\tilde{\mu}(x,z))\}$, the above set of constraints can be replaced by the following one:

\begin{equation}\label{Const_d}
\begin{array}{lll}
& d_{t}\geq y_t(x)-\tilde{\mu}(x,z)-z_t M_t^{+} \quad t=1,\ldots,T
\\
& d_{t}\geq -(y_t(x)-\tilde{\mu}(x,z))-z_t M_t^{-} \quad t=1,\ldots,T
\end{array}
\end{equation}

\noindent where the constants $M_t^+,M_t^->0$ are big enough for each $t=1,\ldots,T$. We can also linearize the quadratic terms $x_j(1-z_t)$ in $\tilde{\mu}(x,z)$ by adding a new set of variables $\tilde{x}_{jt}\geq 0$, $j=1,\ldots,n$, $t=1,\ldots,T$, such that $\tilde{x}_{jt}=x_j (1-z_t)$ by imposing the following constraints:

\begin{equation}\label{Const2_d}
\begin{array}{lll}
&  \tilde{x}_{jt}\leq (1-z_t) \quad j=1,\ldots,n,t=1,\ldots,T \\
&  \tilde{x}_{jt}\leq x_j \quad j=1,\ldots,n,t=1,\ldots,T   \\
&  \tilde{x}_{jt}\geq x_j-z_t \quad j=1,\ldots,n,t=1,\ldots,T.
\end{array}
\end{equation}

\noindent Constraints (\ref{Const2_d}) refer to a standard way of linearizing the product of a binary variable and a continuous nonnegative one. They can be included, together with (\ref{Const_d}), \mrd{in a reformulation of Problem \eqref{P} in which the objective function (\ref{Var_d}) is minimized.}

It is easy to see that, when integrated in our model, constraints (\ref{Const2_d}) are implied by the following ones:

\begin{equation}\label{Const3_d}
\begin{array}{lll}
& \displaystyle \sum_{j=1}^n \tilde{x}_{jt} =  1-z_t \quad t=1,\ldots,T 
\\
& \tilde{x}_{jt}\leq x_j \quad j=1,\ldots,n,t=1,\ldots,T.
\end{array}
\end{equation}

\noindent The above constraints lead to an equivalent formulation that should be \emph{stronger}. This, in fact, has been confirmed in our computational experiments. Thus, in our formulation we replace (\ref{Const2_d}) by (\ref{Const3_d}).

\MR{

\vskip 8 pt
\noindent From the above discussion, we propose the following \mrd{MIQP model}:

\begin{align}
\min & \,\,\,\,\, \sum_{t=1}^T q_t d_t^2 \nonumber \label{K-SF-MVO} \tag{\mrd{Scenario Filtering}}\\
\mbox{ s.t. } \nonumber\\
&  \,\,\,\,\, \Mr{d_{t'}\geq \sum_{j=1}^n r_{jt'} x_j-\sum_{t=1}^T \sum_{j=1}^n q_t r_{jt} \tilde{x}_{jt}-z_{t'} M_{t'}^{+} \quad t'=1,\ldots,T}   \noindent \nonumber\\
& \,\,\,\,\, \Mr{d_{t'}\geq -\sum_{j=1}^n r_{jt'} x_j+\sum_{t=1}^T \sum_{j=1}^n q_t r_{jt} \tilde{x}_{jt}-z_{t'} M_{t'}^{-} \quad t'=1,\ldots,T}     \noindent \nonumber\\
&  \,\,\,\,\, \sum_{j=1}^n \tilde{x}_{jt} =  1-z_t \quad t=1,\ldots,T   \noindent \nonumber\\
&  \,\,\,\,\, \Mr{\tilde{x}_{jt}\leq x_j \quad j=1,\ldots,n,t=1,\ldots,T   \noindent \nonumber}\\
&  \,\,\,\,\, \sum_{t=1}^T \sum_{j=1}^n q_t r_{jt} \tilde{x}_{jt}\geq  \mu_0  \noindent \nonumber\\
&  \,\,\,\,\, \sum_{j=1}^n x_j = 1  \noindent \nonumber\\
&  \,\,\,\,\, \sum_{t=1}^T z_t = K  \noindent \nonumber\\
&  \,\,\,\,\, x_j\geq 0 \quad j=1,\ldots,n   \noindent \nonumber\\
&  \,\,\,\,\, z_t\in\{0,1\} \quad t=1,\ldots,T \noindent \nonumber\\
&  \,\,\,\,\, d_t\geq 0 \quad t=1,\ldots,T \noindent \nonumber\\
&  \,\,\,\,\, \tilde{x}_{jt}\geq 0 \quad j=1,\ldots,n,t=1,\ldots,T. \noindent \nonumber
\end{align}

\noindent The above model is a \MRM{MIQP model} that uses big-$M$ constants. It is well-known that big-$M$ constants often produce large gaps between the continuous relaxation of the model and the MIP objective values, which can induce a poor performance of the model from the computational time point of view. Thus, it is important to provide \emph{tight} values for these constants. In our case, for a given $t'=1,\ldots,T$, we can set the values as follows:

\Mr{$$ M_{t'}^{+} = \max_{\scriptsize\begin{array}{c}x_1,\ldots,x_n\geq 0\\ z_1,\ldots,z_T\in \{0,1\}\end{array}}\left\{\sum_{j=1}^n x_j \left( r_{jt'}- \sum_{t=1}^T q_t r_{jt}(1-z_t) \right) : \sum_{j=1}^n x_j = 1, \sum_{t=1}^T z_t = K , z_{t'}=1\right\} $$}

\noindent and

\Mr{$$ M_{t'}^{-} = \max_{\scriptsize\begin{array}{c}x_1,\ldots,x_n\geq 0\\ z_1,\ldots,z_T\in \{0,1\}\end{array}}\left\{\sum_{j=1}^n x_j \left( -r_{jt'}+ \sum_{t=1}^T q_t r_{jt}(1-z_t) \right) : \sum_{j=1}^n x_j = 1, \sum_{t=1}^T z_t = K , z_{t'}=1\right\}. $$}

\noindent It is not difficult to see that the solution of the above maximization problems can be easily computed as

\Mr{$$ M_{t'}^{+} = \max_{j=1,\ldots,n}\left\{r_{jt'} + B_{jt'}^+\right\} \quad \text{ and } \quad M_{t'}^{-} = \min_{j=1,\ldots,n}\left\{-r_{jt'} + B_{jt'}^- \right\}$$}

\noindent  \Mr{where $B_{jt'}^+$ is the sum of the $K-1$ greater numbers of the set $\{-q_1 r_{j1},\ldots,-q_T r_{jT}\}\setminus\{-q_{t'} r_{jt'}\}$ and $B_{jt'}^-$ is the sum of the $K-1$ greater numbers of the set $\{q_1 r_{j1},\ldots,q_T r_{jT}\}\setminus\{q_{t'} r_{jt'}\}$, for each $j=1,\ldots,n$.}

}

\vskip 8 pt

\noindent We observe that, although alternative MIQP models without big-$M$ constants can be obtained by using different reformulations of Problem \eqref{P}, the advantage of our \mrd{MIQP model} reformulation is that its objective function is convex. Due to this property, the continuous relaxation of the problems arising from a Branch and Bound strategy are convex quadratic programs, for which solvers provide more reliable solutions than the ones obtained for nonconvex problems. Reliability of solutions is crucial when one takes into account the numerical issues that occur when optimizing a MIQP model (see \cite{Bienstock1996}), and also when one consider the possible effect of these numerical issues on the portfolios (out-of-sample) performance. In this way, as observed in our computational experiments, \mrd{our MIQP model} guarantees the reliability of the solutions found without worsening the computational times too much. This is due to the tightness of the big-$M$ constants. Moreover, we observed that when additional variables are used to reformulate \mrd{our MIQP model} avoiding the use of big-$M$ constants, the computational time saved does not compensate for the additional time obtained by augmenting the problem's dimension.

\vskip 8 pt

\noindent \mrd{Our MIQP model} admits a set of valid inequalities related with some others presented in the literature for similar problems. In particular, we note the similarity of Problem \eqref{P} with the problem studied in \cite{Bienstock1996}. In the historical data approach assumed in the present work, the set of considered stocks and the set of available observations determine the input for the MVO problem. Roughly speaking, the problem in \cite{Bienstock1996} corresponds to solving the MVO problem by selecting a cardinality constrained subset of the stocks. In our case, we are interested in solving the  MVO problem but we want to select a subset of time-observations. Therefore, in the former problem there is a cardinality constraint over the number of assets included in the portfolio, while in our model the cardinality constraint is on the number of considered observations. Of course, the constraints are similar but not the same. As in \cite{Bienstock1996} we derive a set of valid inequalities for our problem that are inspired by the cover inequalities for the knapsack problem \cite{NemhauserWolsey1988,WolseyInteger1998}.

\vskip 8 pt

\noindent We have derived \mrd{our MIQP model} by reformulating Problem \eqref{P}. In that process, we have introduced variables $\tilde{x}_{jt}=x_j(1-z_t)$, $j=1,\ldots,n,t=1,\ldots,T$, originally not defined in Problem \eqref{P}. We show now that indeed Problem \eqref{P} can be conceptually stated using only $\tilde{x}_{jt}$ variables. We start by noting that the \MRM{filtered expected return} \eqref{mutilde} can be expressed using $\tilde{x}_{jt}$ variables as

\begin{equation*}
\tilde{\mu}(\tilde{x}) = \sum_{t=1}^T \sum_{j=1}^n q_t r_{jt} \tilde{x}_{jt}.
\end{equation*}

In the same way, and taking into account that $\tilde{V}(x,z) = \sum_{t=1}^T \tilde{q}_t(z_t) (y_t(x))^2 - (\tilde{\mu}(x,z))^2$, we can write the corresponding filtered variance \eqref{Vtilde} as

\begin{equation*}
\tilde{V}(\tilde{x}) = \sum_{t=1}^T q_t(\sum_{j=1}^n  r_{jt} \tilde{x}_{jt})^2 - (\tilde{\mu}(\tilde{x}))^2.
\end{equation*}

\mr{We also need to rewrite the minimum expected return requirement constraint  
\begin{equation*}
\sum_{t=1}^T \sum_{j=1}^n q_t r_{jt} \tilde{x}_{jt}\geq \mu_0
\end{equation*}
in the equivalent form with positive coefficients and positive right-hand side
\begin{equation*}
\sum_{t=1}^T \sum_{j=1}^n a_{jt} \tilde{x}_{jt}\geq \beta,
\end{equation*}
where $a_{jt} = q_t (r_{jt} + \alpha) > 0$, $j=1,\ldots,n,t=1,\ldots,T$, $\beta = \mu_0+\alpha > 0$, and $\alpha\geq 0$ is taken big enough. The equivalence between the constraints above follows from the fact that 
$\sum_{t=1}^T \sum_{j=1}^n q_t \alpha \tilde{x}_{jt} = \alpha\frac{1}{T-K}\sum_{t=1}^T (1-z_t)\sum_{j=1}^n x_j = \alpha $ in the setting of Problem \eqref{P}.} Finally, Problem \eqref{P} can be re-stated as

\mr{
\begin{align}
\min & \,\,\,\,\, \tilde{V}(\tilde{x}) \nonumber \label{P_c} \tag{P$_{\tilde{x}}$}\\
\mbox{ s.t. } \nonumber\\
&  \,\,\,\,\, \sum_{t=1}^T \sum_{j=1}^n a_{jt} \tilde{x}_{jt}\geq  \beta  \noindent \nonumber\\
&  \,\,\,\,\, \left|\left\{t\in \{1,...,T\}: \tilde{x}_{jt}>0 \text{ for some } j=1,\ldots,n\right\}\right| = T-K  \noindent \nonumber\\
&  \,\,\,\,\, \sum_{j=1}^n \tilde{x}_{jt} = 1 \quad t=1,\ldots,T:\tilde{x}_{jt}>0 \text{ for some } j=1,\ldots,n   \noindent \nonumber\\
&  \,\,\,\,\, \tilde{x}_{jt} = \tilde{x}_{jt'} \quad j=1,\ldots,n,t,t'=1,\ldots,T: x_{jt},x_{jt'}>0  \noindent \nonumber\\
&  \,\,\,\,\, 0\leq \tilde{x}_{jt}\leq 1 \quad j=1,\ldots,n,t=1,\ldots,T.   \noindent \nonumber
\end{align}

\vskip 8 pt

\noindent Using the same terminology as in \cite{Bienstock1996}, we say that a set $S\subseteq\{1,...,T\}$ is \emph{critical} if: 
\begin{equation*}
\text{for every $R\subseteq S$ with $|R|=T-K$, \,\,$\sum_{t\in R}\sum_{j=1}^n a_{jt} < \beta$.}
\end{equation*}
Then, the following results hold.
}

\begin{prop}\label{PropValidInequalities}
If $S$ is a critical set, then the set of inequalities
\begin{equation*}
\sum_{t\in S}\tilde{x}_{jt} \leq T-K-1 \quad j=1,...,n
\end{equation*}
are valid for Problem \eqref{P_c}.
\end{prop}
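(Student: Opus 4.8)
The plan is to argue by contradiction on the support structure of a feasible point, using criticality only through the single subset $R=U$ defined below. First I would unpack what a feasible solution of Problem \eqref{P_c} looks like. The constraint $|\{t : \tilde{x}_{jt}>0 \text{ for some } j\}| = T-K$ singles out the set $U$ of unfiltered scenarios, with $|U|=T-K$; the equalities $\tilde{x}_{jt}=\tilde{x}_{jt'}$ across $t,t'\in U$ together with $\sum_{j}\tilde{x}_{jt}=1$ let me define common weights $x_j := \tilde{x}_{jt}$ (for any $t\in U$) satisfying $x_j\geq 0$ and $\sum_{j}x_j=1$, so in particular $x_j\leq 1$; off $U$ all $\tilde{x}_{jt}=0$. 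Consequently, for every fixed $j$,
$$\sum_{t\in S}\tilde{x}_{jt} = |S\cap U|\,x_j,$$
and since $x_j\leq 1$ it will suffice to show $|S\cap U|\leq T-K-1$, that is, that $U$ is not entirely contained in $S$.

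The key step is to play criticality of $S$ against the return constraint. Suppose, for contradiction, that $U\subseteq S$. Because $|U|=T-K$, taking $R=U$ in the definition of a critical set yields $\sum_{t\in U}\sum_{j=1}^n a_{jt} < \beta$. On the other hand, the return constraint of Problem \eqref{P_c} collapses to $U$ (since $\tilde{x}_{jt}=0$ off $U$) and reads $\sum_{t\in U}\sum_{j=1}^n a_{jt}x_j \geq \beta$. Here the positivity $a_{jt}>0$, guaranteed by the earlier reformulation $a_{jt}=q_t(r_{jt}+\alpha)$ with $\alpha$ big enough, is essential: together with $0\leq x_j\leq 1$ it gives $a_{jt}x_j\leq a_{jt}$, whence
$$\sum_{t\in U}\sum_{j=1}^n a_{jt}x_j \;\leq\; \sum_{t\in U}\sum_{j=1}^n a_{jt} \;<\; \beta,$$
contradicting the return constraint. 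Therefore $U\not\subseteq S$, so $|S\cap U|\leq T-K-1$, and $\sum_{t\in S}\tilde{x}_{jt}=|S\cap U|\,x_j\leq T-K-1$ for every $j$, which is exactly the claimed inequality.

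I expect the only delicate point to be the bookkeeping of the first paragraph: making precise how the abstract $\tilde{x}$-formulation of Problem \eqref{P_c} recovers the support set $U$ of unfiltered scenarios and the common weights $x_j$, and checking that $x_j\leq 1$ follows from the budget constraint $\sum_j x_j=1$ with $x_j\geq 0$. Once that correspondence is in place, the heart of the argument is the single inequality $\sum_{t\in U}\sum_{j}a_{jt}x_j\leq\sum_{t\in U}\sum_{j}a_{jt}$, which is precisely the reason the return requirement was rewritten with strictly positive coefficients $a_{jt}$ and positive right-hand side $\beta$. This mirrors the cover-inequality reasoning for the knapsack problem referenced from \cite{Bienstock1996}; I would state the final bound uniformly over $j$ rather than isolating the trivial case $x_j=0$, and I would note that when $|S|<T-K$ the inclusion $U\subseteq S$ is impossible outright, so the conclusion holds vacuously.
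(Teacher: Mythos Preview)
Your argument is correct and follows essentially the same line as the paper's proof: both establish that at most $T-K-1$ of the active (unfiltered) scenarios can lie in a critical set $S$, for otherwise the return constraint $\sum_{t}\sum_{j}a_{jt}\tilde{x}_{jt}\geq\beta$ is violated, and then use the consistency of the $\tilde{x}_{jt}$ across active scenarios (together with $x_j\leq 1$) to bound the sum. Your write-up is in fact more explicit than the paper's two-sentence proof, spelling out the role of the positivity $a_{jt}>0$ and the contradiction via $R=U$; this is all fine and nothing needs to change.
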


\begin{proof}
Note that the definition of critical set $S$ implies that in Problem \eqref{P_c} at most $T-K-1$ indices $t\in S$ can have associated nonzero $x_{jt}$ variables, otherwise the minimum expected return requirement constraint is not satisfied. Since Problem \eqref{P_c} also forces the consistency of the `actives' scenarios in the sets $\{\tilde{x}_{j1},\ldots,\tilde{x}_{jT}\}$, $j=1,\ldots,n$, the result follows.\qed
\end{proof}

\begin{cor}\label{CorValidInequalities}
The set of inequalities given in Proposition \ref{PropValidInequalities} are also valid for \mrd{our MIQP model}.
\end{cor}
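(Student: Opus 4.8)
The plan is to transfer validity from Problem \eqref{P_c} to \mrd{our MIQP model} by showing that every feasible point of the latter induces, through its $\tilde{x}_{jt}$ variables, a feasible point of the former. Since the inequalities of Proposition \ref{PropValidInequalities} involve only the $\tilde{x}_{jt}$ variables, their validity then carries over automatically, and no separate combinatorial argument about critical sets is needed.

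First I would pin down the structure forced on the $\tilde{x}_{jt}$ in any feasible solution of \mrd{our MIQP model}. From the constraints $\sum_{j=1}^n \tilde{x}_{jt} = 1-z_t$, $\tilde{x}_{jt}\le x_j$, $\tilde{x}_{jt}\ge 0$, together with $\sum_{j=1}^n x_j = 1$ and $z_t\in\{0,1\}$, one reads off that when $z_t=1$ all $\tilde{x}_{jt}=0$, while when $z_t=0$ the equality $\sum_{j=1}^n \tilde{x}_{jt}=1=\sum_{j=1}^n x_j$ combined with $\tilde{x}_{jt}\le x_j$ forces $\tilde{x}_{jt}=x_j$ for every $j$. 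Hence the identity $\tilde{x}_{jt}=x_j(1-z_t)$ is recovered exactly, as anticipated in the derivation of the model.

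Next I would verify that the projection onto the $\tilde{x}_{jt}$ coordinates satisfies every constraint of Problem \eqref{P_c}. The set of scenarios carrying a positive $\tilde{x}_{jt}$ coincides with $\{t: z_t=0\}$, whose cardinality is $T-\sum_{t=1}^T z_t = T-K$, giving the cardinality constraint; for such an active scenario $\sum_{j=1}^n \tilde{x}_{jt}=1-z_t=1$, giving the second constraint; for two active scenarios $t,t'$ we have $\tilde{x}_{jt}=x_j=\tilde{x}_{jt'}$, giving the consistency constraint; and $0\le \tilde{x}_{jt}\le 1$ follows from $0\le x_j\le 1$. Finally, the minimum-return constraint $\sum_{t=1}^T\sum_{j=1}^n q_t r_{jt}\tilde{x}_{jt}\ge\mu_0$ of the MIQP model is, by the identity $\sum_{t=1}^T\sum_{j=1}^n q_t \alpha\tilde{x}_{jt}=\alpha$ established earlier, exactly equivalent to $\sum_{t=1}^T\sum_{j=1}^n a_{jt}\tilde{x}_{jt}\ge\beta$, so the first constraint of Problem \eqref{P_c} holds as well.

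With this correspondence in hand the conclusion is immediate: for a critical set $S$, Proposition \ref{PropValidInequalities} guarantees $\sum_{t\in S}\tilde{x}_{jt}\le T-K-1$ at every feasible point of Problem \eqref{P_c}, hence at the $\tilde{x}_{jt}$-projection of every feasible point of \mrd{our MIQP model}; since the inequality constrains only the $\tilde{x}_{jt}$, it is valid for \mrd{our MIQP model}. The one delicate point is making sure feasibility transfers intact, in particular matching the two equivalent forms of the expected-return constraint and confirming the forced identity $\tilde{x}_{jt}=x_j(1-z_t)$; but both reduce to constraints already present in the model, so no genuinely new estimate is required.
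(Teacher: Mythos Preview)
Your proposal is correct and aligns with the paper's (implicit) reasoning: the paper states the corollary without proof, treating it as immediate from the fact that the MIQP model and Problem \eqref{P_c} are equivalent reformulations of Problem \eqref{P}, so that valid inequalities in the $\tilde{x}_{jt}$ variables carry over. You have simply made explicit the feasibility correspondence (in particular, that the constraints of the MIQP model force $\tilde{x}_{jt}=x_j(1-z_t)$ exactly, and that the two forms of the expected-return constraint coincide), which is precisely the mechanism the paper relies on tacitly.
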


\noindent We remark that each critical set in \cite{Bienstock1996}, or cover set in \cite{WolseyInteger1998} (also called dependent set in \cite{NemhauserWolsey1988}), has a naturally associated valid inequality for the problem considered in each reference. Given the particularity of our Problem \eqref{P_c}, each critical set provides $n$ valid inequalities for this problem. 

\vskip 8 pt

\noindent As we will see in the Experimental results section, \mrd{our MIQP model} is \Mr{efficiently solvable for small and medium size financial datasets}. For larger size instances, computational times are considerably longer. For this reason, in the following section we propose a heuristic approach for solving \mrd{Problem \eqref{P}}.

\subsection{\mrd{The heuristic algorithm}}

\mr{In this subsection we present our heuristic procedure. We note that solving \mrd{Problem \eqref{P}} implies to decide which $K$ scenarios have to be filtered from the $T$ observed ones. This corresponds to make a decision among $\binom{T}{K}$ possible ones. Thus, it is reasonable to think that the computational effort to solve \mrd{Problem \eqref{P}} grows with the cardinality of the decision set, from $K=1$ up to $K=\frac{T}{2}$. Indeed, this was confirmed by our computational experiments. Based on this consideration, we design a heuristic algorithm that exploits a nested solutions strategy. \mrd{Let \mrd{Problem (P$_k$)} be our Problem \eqref{P} when $k\leq K$ scenarios have to be filtered, and let $z(k)=(z_{1}(k),\ldots,z_{t}(k),\ldots, z_{T}(k))$ be the filtering decision variables of Problem (P$_{k}$)}, $k=1,\ldots,K$. Then, at each step we solve \mrd{Problem (P$_{k+1}$)} but keeping the best scenarios filtering $z^*(k)$ found by the heuristic for the previous \mrd{Problem (P$_k$)}, $k=1,\ldots,K-1$.  The computational effort to perform each step equals the effort for solving \mrd{Problem \eqref{P} when only one scenario has to be filtered}. We show the basic pseudocode below.}

\begin{algorithm}
\caption{\mrd{Heuristic}}
\label{alg:sequential}
\renewcommand{\thealgorithm}{}
\floatname{algorithm}{}
\begin{algorithmic}[1]
    \Begin
    \State solve \mrd{Problem (P$_1$)}
    \State \mr{let $z^*(1)$ be the scenarios filtering obtained}
    \For{$k=2,\ldots,K$}
    \State \Mr{solve \mrd{Problem (P$_k$)} fixing $z_{t}(k)=1$ if $z_{t}^*(k-1)=1$, $t=1,\ldots,T$}
    \State \mr{let $z^*(k)$ be the scenarios filtering obtained}
    \EndFor
    \State \Return $x^*$ the best portfolio found
    \End
\end{algorithmic}
\end{algorithm}

\vskip 8 pt

\noindent It is clear that, in Algorithm \ref{alg:sequential} the solution found at the first step (line 2) is optimal for $k=1$. On the contrary, from step 2 on, we obtain suboptimal solutions for $k\geq 2$. In spite of this, in our experiments the heuristic showed to be effective in finding good quality solutions. To implement Algorithm \ref{alg:sequential} we can solve \mrd{our MIQP model for $K=1$} in line 2, and \mrd{our MIQP model for $K=k$} in line 5 but fixing appropriately the $z_t$ variables. We call Version 1 this implementation of Algorithm \ref{alg:sequential}. An alternative implementation, which we call Version 2, is described below.

\vskip 8 pt

\noindent Let $\mathcal{T}=\{1,\ldots,T\}$. Suppose that we want to solve the MVO problem but considering only a given subset of the observations $R\subseteq \mathcal{T}$. From the results in Section 3.1, it is straightforward to see that such a problem can be formulated as follows:

\begin{align}
\min & \,\,\,\,\, \sum_{t\in R} \hat{p}_t(R)d_t^2 \nonumber \label{RMVO} \tag{$R$-MVO}\\
\mbox{ s.t. } \nonumber\\
&  \,\,\,\,\, d_{t'}\geq \sum_{j=1}^n r_{jt'} x_j-\sum_{t\in R} \sum_{j=1}^n \hat{p}_t(R) r_{jt} x_j \quad t'\in R   \noindent \nonumber\\
&\,\,\,\,\, d_{t'}\geq -\sum_{j=1}^n r_{jt'} x_j+\sum_{t\in R} \sum_{j=1}^n \hat{p}_t(R) r_{jt} x_j \quad t'\in R   \noindent \nonumber\\
&  \,\,\,\,\, \sum_{t\in R} \sum_{j=1}^n \hat{p}_t(R) r_{jt} x_j\geq  \mu_0  \noindent \nonumber\\
&  \,\,\,\,\, \sum_{j=1}^n x_j = 1  \noindent \nonumber\\
&  \,\,\,\,\, x_j\geq 0 \quad j=1,\ldots,n   \noindent \nonumber\\
&  \,\,\,\,\, d_t\geq 0 \quad t\in R \noindent \nonumber
\end{align}
where $\hat{p}_t(R)=\frac{1}{|R|}$, $t\in R$. Problem $R$-MVO is a simple convex quadratic program and therefore it can be solved quite efficiently. Now, note that line 2 in Algorithm \ref{alg:sequential} can be performed by solving Problem $R$-MVO for each set $R=\mathcal{T}\setminus\{t\}$, $t\in\mathcal{T}$, and returning the best solution found. In the case of line 5, it can be performed by solving Problem $R$-MVO for each set $R=(\mathcal{T}\setminus \{t\in \mathcal{T}: z_t^*(k-1)=1\})\setminus\{t'\}$, $t'\in\{t\in \mathcal{T}: z_t^*(k-1)=0\}$, and  returning the best solution found. This procedure is our Version 2 of the implementation of Algorithm \ref{alg:sequential}. The reader may notice that this procedure can be replicated using the original MVO model, which is also a convex quadratic program, using the corresponding covariance matrix in function of the considered observations. However, this alternative way of performing the procedure requires the computation of the covariance matrix for each MVO model, which charges an unnecessary additional computational effort.

\vskip 8 pt

\noindent  Which among the two versions of Algorithm \ref{alg:sequential} is more efficient depends on the number of observations $T$ and the size $n$ of the set of assets. In the conditions of our computational experiments, for a fixed $T$, Version 1 of the Algorithms performs better than Version 2 for small size datasets, while Version 2 performs quite better than Version 1 for medium/large size datasets.

\section{Experimental results}

In this section we present an empirical analysis on real stocks market data with the aim of evaluating the out-of-sample performance of the portfolios selected by \Mr{the MVO model when filtered according to our Scenario Filtering approach.}
%\eqref{FO_New}, (\ref{D2})-(\ref{D11}) and \eqref{D15}-\eqref{D16}.
In addition, we compare the performances of our filtered portfolios with those obtained by the MVO model filtered by adopting the RMT and the Power Mapping techniques, and the portfolios selected by the classical Markowitz model.
\vskip 8 pt

\noindent We test all the above portfolio selection strategies on some real-world datasets belonging to the major stock markets across the world. We consider the following datasets\footnote{The datasets are available upon request} that were also used in \cite{BruCeScoTar_2017,PuerMadSco2020}:

\begin{itemize}
\item[1.] DJIA (Dow Jones Industrial Average, USA), containing 28 assets and 1353 price observations (period: 07/05/1990 - 04/04/2016);
\item[2.] EUROSTOXX50 (Europe's leading blue-chip index, EU), containing 49 assets and 729 price observations (period: 22/04/2002-04/04/2016);
\item[3.] FTSE100 (Financial Times Stock Exchange, UK), containing 83 assets and 625 price observations (period: 19/04/2004-04/04/2016);
\item[4.] SP500 (Standard $\&$ Poor's, USA), containing 442 assets and 573 observations (period: 18/04/2005-04/04/2016).
\end{itemize}

Each dataset consists of weekly prices data. To evaluate the performance of \Mr{the} models in practice, we divide the observations in two sets, where the first one is regarded as the past (in-sample window), and so it is considered known, and the rest is regarded as the future (out-of-sample window), supposed unknown at the time of portfolio selection. The in-sample window is used for selecting the portfolio, while the out-of-sample one is used for testing the performance of the selected portfolio. In particular, in our experiments we use a \emph{rolling time window} scheme allowing for the possibility of rebalancing the portfolio composition during the holding period, at fixed intervals. Following \cite{JegTit_2001,ManOgrySpe_2007,PuerMadSco2020}, for each dataset we adopt a period of 52 weeks (one year) as in-sample window and we consider 12 weeks (three months) as out-of-sample, with rebalancing allowed every 12 weeks.

\Mr{For each dataset, in} all the portfolio selection models considered we set $\mu_0$ equal to the \Mr{average of the market portfolio return} in the in-sample \Mr{period}. When considering the RMT filtering method we choose the 5 largest eigenvalues ($p=5$), since, as reported in \cite{Schafer_etal_2010}, this choice yielded the best results in their experimental framework. For the Power Mapping method  we set $q = 1.25$ since this value in \cite{Schafer_etal_2010} provided the best results in the case without short-selling. \Mr{In the case of our \mrd{Scenario Filtering approach} we} decided to remove up to 5 scenarios \Mr{of each in-sample period, in order to} not increase the computational burden of our method and, at the same time, to not distort the dataset too much. Since each in-sample period contains 52 observations, the above decision corresponds to remove up to the $10\%$ (approximately) of the observations, which seems reasonable if one takes into account the frequency of occurrence of extreme events in the fat tail distributions that characterize returns distributions in financial markets. From the portfolio value performance viewpoint, we show that, in fact, this choice of $K$ is enough to outperform the other competing filtering models.

In the following tables we consider some classic out-of-sample performance measures described below:

\begin{itemize}
\item[1.] \emph{Average return} (\Mr{AvReturn}): it is defined as the average $E[\mu^{\rm{out}}(x)]$ of the out-of-sample returns of a portfolio \Mr{$x$. The larger is the value of the index, the better is the corresponding portfolio performance.}

\item[2.] \emph{Out-of-sample Variance} (V-Out): \Mr{it is the variance $\sigma^2(\mu^{\rm{out}}(x))$ of the out-of-sample return of a portfolio $x$.} The smaller is the value of the index, the better is the corresponding portfolio performance.

\item [3.] \emph{Sharpe Ratio} (\Mr{Sharpe}) (\cite{Sharpe1966,Sharpe1994}): it is defined as the ratio between the average of the out-of-sample return of a portfolio $x$, $\mu^{\rm{out}}(x)$, minus a constant risk free rate of return $r_f$ (that we set equal to 0), and its standard deviation, namely:
$$\frac{E[\mu^{\rm{out}}(x) - r_f]}{\sigma(\mu^{\rm{out}}(x))}.$$

\noindent In the bi-criteria optimization approach of the classical MVO model, the larger is the value of the index, the better is the portfolio performance.
\end{itemize}

\noindent \Mr{We note that, since we are adopting a rolling time window scheme, the portfolio $x$ in the above performance measures is not `static' as it is rebalanced in each in-sample period. In addition, we include in the tables the following information:}

\begin{itemize}
\item[4.] \Mr{\emph{Mean number of assets} (MeanAssets): it is the mean of the numbers of assets selected in portfolio $x$ in each in-sample period.
We consider that an asset $j$ is selected in portfolio $x$ if $x_j\geq 0.01$, $j=1,\ldots,n$.}

\item[5.] \emph{Mean time} (MeanTime): it is the mean of the CPU times for solving the considered model in each in-sample period. In the case of the heuristic procedure, it is the mean of the running times. In each in-sample period we set a time limit of 7200 seconds for solving the model. This time limit is reached by our MIQP model only in some instances of the SP500 dataset. In these cases in which the solver may not have been able to find an optimal solution within the time limit, we also provide a measure of the gap obatined.

\item [6.] \emph{Mean gap} (MeanGap): it is the mean of the relative gaps, in percentage, in each in-sample period. As commented above, it only applies to our MIQP model in some instances of the SP500 dataset.

\item [7.] \emph{Mean Relative Error} (MRE): it is the mean of the relative errors, in percentage, in each in-sample period. It only applies to our heuristic procedure as a measure of the relative difference between the optimal
solution obtained by solving \mrd{our MIQP model} and the best heuristic solution found. Small MRE values indicate that the solutions provided by the heuristic are close to the corresponding
optimal ones. For the SP500 dataset, MRE uses the best (possibly optimal) solutions found by \mrd{our MIQP model} within the time limit.
\end{itemize}

\noindent The models have been implemented in \Mr{MATLAB R2018a} and they make calls to XPRESS solver version 8.5 for solving the MIP problems. All experiments were run in a computer DellT5500 with a processor Intel(R) Xeon(R) with a CPU X5690 at 3.75 GHz and 48 GB of RAM memory.
\vskip 8 pt

\noindent \Mr{We start by showing the potential of our approach for lowering the out-of-sample risk (i.e., V-Out). We will only use for this demostration \mrd{our MIQP model}, as it corresponds to the exact implementation of our \mrd{Scenario Filtering approach} and given that it is enough for our illustrative purpose. Although similar results are obtained for \mrd{our heuristic algorithm}, a more detailed analysis of this method will be provided in the sequel.} We show in Table \ref{Table_0} the portfolio out-of-sample risk regardless of the effect of the portfolio return. This allows to better evaluate only the realized risk of optimized portfolios. To do this, in all models we \Mr{require} that the portfolio expected return be exactly equal to $\mu_0$ (see, e.g., \cite{Kondor_etal_2007}). The following table shows the \Mr{portfolio out-of-sample risk values} for all the \Mr{portfolio selection models and datasets considered}. Best values are reported in bold.

%\begin{sidewaystable}
\begin{table}[htb]
\caption{\Mr{Out-of-sample risk} ($\cdot 10^{-4}$) with a \mr{12 weeks} rebalancing \Mr{when the expected return of the portfolio is set to $\mu_0$}.}
\label{Table_0}
\centering
\small
\begin{tabular}{|c||c|c|c|c|}
\hline
   & DJIA & EUROSTOXX  & FTSE100 & SP500 \\
\hline
\hline
Markowitz	    &	4.518	&	6.465	&	4.629	& 4.188	\\
\hline
\hline
\hline
RMT 	        &	4.583	&	6.389	&	4.631	& 4.215	\\
\hline
Power Mapping	&	4.375	&	6.398	&	4.398	& 4.052	\\
%Index           &   5.458   &   8.732   &   6.656   & 7.544 \\
\hline
\hline
\mrd{Scenario Filtering}\\
\hline
$K=1$ 	        &	4.416	&	 \textbf{5.424}	&   \textbf{4.280}	& \textbf{3.976}	\\
$K=2$ 	        &	\textbf{4.307}	&	 5.627	&   4.491	& 4.233	\\
$K=3$ 	        &	4.408	&	 5.648	&   4.311	& 4.462	\\
$K=4$ 	        &	4.328	&	 5.909	&   4.342	& 4.521	\\
$K=5$ 	        &	4.352	&    5.803  &	4.397   & 4.554	\\
\hline
\end{tabular}
\end{table}
%\end{sidewaystable}

\vskip 8 pt
\noindent From the above table there seems to be a slight preference for our model. In fact, on all datasets, there always exists a value of $K$ by which we are able to produce portfolios with lower out-of-sample risk than the others. We observe that our \mrd{MIQP model} always provides the best realized risk values (in bold). We also remark that for three of the four datasets this result was obtained with $K=1$. The choice $K=1$ corresponds removing about only the $2\%$ of the observations in each in-sample period. This shows the impact of extreme observations in the data, and, therefore, it certifies the correctness of our approach.

\noindent On the other hand, the classical Markowitz model, from which all the approaches presented in this paper originate, is a bi-criteria optimization model, and, therefore, the comparison must be performed on the basis of both values of realized risk and return. To this aim, Tables \ref{Table_1}-\ref{Table_4} report the complete out-of-sample analysis based on all the performance measures introduced at the beginning of this section. In this case, in all the models the portfolio expected return constraint is modeled as an inequality, so that, it is possible to jointly assess the effect of return and risk in the selected portfolios. In this case, for the sake of completeness, in the following tables and for each dataset we also include, as a benchmark, the values of the performance measures of the market portfolio, referred as ``Market''.

%\begin{sidewaystable}
\begin{table}[htb]
\caption{Out-of-sample performances for DJIA ($n=28$) with a \mr{12 weeks} rebalancing.}
\label{Table_1}
\centering
\small
\begin{tabular}{|c||c|c|c|c|c|}
\hline
   & AvReturn ($\cdot 10^{-3}$) & V-Out ($\cdot 10^{-4}$)  & Sharpe ($\cdot 10^{-2}$) &  MeanAssets & MeanTime (sec.)\\
\hline
\hline
Market     &   1.657   &   5.460   &  7.090  &    28  &  -        \\
\hline
\hline
Markowitz	&	1.908	&	4.036	&	9.500  &	9.9	& 0.031		\\
\hline
\hline
RMT 	&	1.842	&	4.039	&	9.164  &	9.7	&	0.038		\\
\hline
Power Mapping	&	1.930	&	\textbf{3.893}	&	9.780	&	11.9	 &	0.035	\\
\hline
\hline
\mrd{Scenario Filtering}\\
\hline
$K=1$	        &	1.915	&	   4.136	&        9.418 &			9.8	&	1.969		\\
$K=2$	        &	1.907	&	   4.096	&        9.420 &			9.7	&	2.806		\\
$K=3$            &	1.888	&	   4.099	&        9.325 &			9.8	&	7.033		\\
$K=4$	        &	1.924	&	   4.166	&        9.428 &			9.8	&	22.402		\\
$K=5$	        &	\textbf{2.032}	&  4.245	&	\textbf{\emph{9.864}}	 &	9.8 &	63.690		\\
\hline
\end{tabular}
\end{table}
%\end{sidewaystable}

\begin{table}[htb]
\caption{Out-of-sample performances for EUROSTOXX ($n=49$) with a \mr{12 weeks} rebalancing.}
\label{Table_2}
\centering
\small
\begin{tabular}{|c||c|c|c|c|c|}
\hline
   & AvReturn ($\cdot 10^{-3}$) & V-Out ($\cdot 10^{-4}$)  & Sharpe ($\cdot 10^{-2}$) &  MeanAssets & MeanTime (sec.)\\
\hline
\hline
Market	    &	0.799	&	8.732	& 2.703	 &	49 &	-		\\
\hline
\hline
Markowitz	&	1.778	&	4.902	&	\textbf{8.029} &	8.9	&	0.036		\\
\hline
\hline
RMT    &	1.666	&	4.899	&	7.527	&		9.1 &	0.041		\\
\hline
Power Mapping	&	1.705	&	\textbf{4.785}	&	7.793	&	11.1	 &	0.038	\\
\hline
\hline
\mrd{Scenario Filtering}\\
\hline
$K=1$ 	        &	1.724	&	5.250	&	7.526 &		9.6	&	3.253		\\
$K=2$ 	        &	1.810	&	5.388	&	7.798 &		9.7	&	4.502		\\
$K=3$ 	        &	1.790	&	5.500	&	7.633 &		9.9	&	7.951		\\
$K=4$ 	        &	1.840	&	5.546	&	7.812 &	    9.6	&	21.540		\\
$K=5$	  &	\textbf{1.887}	&	5.606	&	\emph{7.971}	&		9.9 &	61.881		\\
\hline
\end{tabular}
\end{table}
%\end{sidewaystable}

\begin{table}[htb]
\caption{Out-of-sample performances for FTSE100 ($n=83$) with a \mr{12 weeks} rebalancing.}
\label{Table_3}
\centering
\small
\begin{tabular}{|c||c|c|c|c|c|}
\hline
   & AvReturn ($\cdot 10^{-3}$) & V-Out ($\cdot 10^{-4}$)  & Sharpe ($\cdot 10^{-2}$) &  MeanAssets & MeanTime (sec.)\\
\hline
\hline
Market	    &	0.762	&	6.656	&	2.954  &	83 & -	   	    \\
\hline
\hline
Markowitz	&	1.852	&	4.339	&	8.892 &	12.7	& 0.042		\\
\hline
\hline
RMT 	&	1.577	&	4.333	&	7.578  &	14.4	 & 0.046	\\
\hline
Power Mapping	&	1.749	&	4.091	&	8.646 &	16.4 	&  0.047		\\
\hline
\hline
\mrd{Scenario Filtering}\\
\hline
$K=1$ 	        &	\textbf{2.406}	&	4.015	&	\textbf{\emph{12.001}} &	12.7	&	8.365		\\
$K=2$            &	2.152	&	\textbf{3.944}	&	10.840 &	13.2	& 13.877	\\
$K=3$            &	1.905	&	4.176	&	9.324 &	13.3	& 31.997		\\
$K=4$ 	        &	1.873	&	4.134	&	9.212 &	13.1	&	120.866		\\
$K=5$ 	        &	1.837	&	4.096	&	9.075 &	13.4	&	367.511		\\
\hline
\end{tabular}
\end{table}
%\end{sidewaystable}

\begin{table}[htb]
\caption{Out-of-sample performances for SP500 ($n=442$) with a \mr{12 weeks} rebalancing.}
\label{Table_4}
\centering
\small
\begin{tabular}{|c||c|c|c|c|c|c|}
\hline
   & AvReturn ($\cdot 10^{-3}$) & V-Out ($\cdot 10^{-4}$)  & Sharpe ($\cdot 10^{-2}$) &  MeanAssets & MeanTime (sec.) & MeanGap ($\%$)  \\
\hline
\hline
Market	    &	1.292	&	7.544	&	4.704     &	  442 &	-		    &	 -	\\
\hline
\hline
Markowitz	    &	1.560	&	3.603	&	8.220 &	16.4	    &	0.230	&	 -	\\
\hline
\hline
RMT 	&	1.619	&	\textbf{3.347}	&	8.847	    &	21.2	 & 0.263	&		-\\
\hline
Power Mapping	&	1.506	&	3.398	&	8.170	    &	21.8	 & 69.195	&		-\\
\hline
\hline
\mrd{Scenario Filtering}\\
\hline
$K=1$	        &	1.273	&	3.706	&	6.613	    &	17.6	 & 131.609	&	 0\\
$K=2$	        &	1.161	&	3.766	&	5.981	    &	17.4	 & 319.337	&	0\\
$K=3$	        &	1.320	&	3.924	&	6.662	&	17.2	 & 1055.618	&	0\\
$K=4$	        &	1.455	&	4.165	&	7.132	&	17.9	 & 4106.864	&	 15.488\\
$K=5$	  &	\textbf{2.016}	&	4.302	&	\textbf{\emph{9.720}}	&	18.1	 & 6174.948	&	 58.857	\\
\hline
\end{tabular}
\end{table}
%\end{sidewaystable}

\vskip 8 pt

\noindent In Tables \ref{Table_1}-\ref{Table_4} best values are in bold. We observe that our \mrd{MIQP model} always provides portfolios having (on average) the best out-of-sample performance in terms of return. On the other hand in this case, except for the FTSE100 dataset, the realized risk of our portfolios is worse than the realized risk provided by the portfolios found by the two alternative filtering models. However, in a bi-criteria framework and from an investor viewpoint, the Sharpe ratio, which evaluates the compromise between the return of a portfolio and the risk that the investor is affording, is the most significant quantity. In our experiments, the Sharpe ratio of our portfolios always outperforms the ratios provided by the Power Mapping and RMT models for all the datasets (see values in \emph{italic}).
\vskip 8 pt
\noindent About the ``MeanAssets'' column, in Tables \ref{Table_1}-\ref{Table_4} we note that the average number of selected stocks provided by our approach is the same as the one provided by the classical Markowitz model. On the other hand, the RMT and Power Mapping approaches tend to select slightly more assets than our model especially for large datasets. Limiting the number of selected stocks is often a requirement that come from real-world practice where the administration of a portfolio made up of a large number of assets, possibly with very small holdings for some of them, is clearly not desirable because of transactions costs, minimum lot sizes, complexity of management, or policy of the asset management companies.

\vskip 10 pt
\noindent As observed, and as evident from the \Mr{above} tables, our MIQP model is hard to solve at optimality especially for large financial datasets.
In general, computational times grow w.r.t. to the number of assets and the parameter $K$. From a practitioner viewpoint,
there is the need of computing portfolios having a good out-of-sample performance without too much waste of time.
Hence, in the following Tables \ref{Table_5}-\ref{Table_8} we report the same experimental analysis by applying
the heuristic procedures introduced in Section 3.2.

%\newpage

\begin{table}[htb]
\caption{Out-of-sample performances for DJIA ($n=28$) with a \mr{12 weeks} rebalancing applying the \mrd{heuristic procedure}.}
\label{Table_5}
\centering
\small
\begin{tabular}{|c||c|c|c|c|c|c|}
\hline
   & AvReturn ($\cdot 10^{-3}$) & V-Out ($\cdot 10^{-4}$)  & Sharpe ($\cdot 10^{-2}$) &  MeanAssets & MeanTime (sec.) &  MRE ($\%$) \\
\hline
\hline
Market       &   1.657   &   5.460   &  7.090    &  28 & -     &     -   \\
\hline
\hline
Markowitz	&	1.908	&	4.036	&	9.500	& 9.9	& 0.031	&	 -\\
\hline
\hline
RMT 	&	1.842	&	4.039	&	9.164	& 9.7 &	0.038	&	 -	\\
\hline
Power Mapping	&	1.930	&	\textbf{3.893}	&	9.780	& 11.9 & 	0.035	&	 -	\\
\hline
\hline
\mrd{Heuristic}\\
\hline
$K=1$	&	1.915	&	4.136   &	9.418	& 9.8	&	1.985	&		0	\\
$K=2$	&	1.897	&	4.103	&	9.367	& 9.7	&	4.206	&		0.073	\\
$K=3$	&	1.957	&	4.106	&	9.658	& 9.8	&	6.557	&		0.294	\\
$K=4$	&	1.980	&	4.176	&	9.690	& 9.8	&	8.870	&		0.453	\\
$K=5$	&	\textbf{2.069}	&	4.245	&	\textbf{\emph{10.043}}	& 9.8	&	11.155	&		0.575	\\
\hline
\end{tabular}
\end{table}

\begin{table}[htb]
\caption{Out-of-sample performances for EUROSTOXX ($n=49$) with a \mr{12 weeks} rebalancing applying the \mrd{heuristic procedure}.}
\label{Table_6}
\centering
\small
\begin{tabular}{|c||c|c|c|c|c|c|}
\hline
   & AvReturn ($\cdot 10^{-3}$) & V-Out ($\cdot 10^{-4}$)  & Sharpe ($\cdot 10^{-2}$) &  MeanAssets & MeanTime (sec.) &  MRE ($\%$) \\
\hline
\hline
Market	        &	0.799	&	8.732	& 2.703	 & 49	&	-	&	 -\\
\hline
\hline
Markowitz	&	1.778	&	4.902	&	\textbf{8.029}	&	 8.9	 & 0.036	&	 -\\
\hline
\hline
RMT             &	1.666	&	4.899	&	7.527	&	9.1 & 0.041	&		 -	\\
\hline
Power Mapping	&	1.705	&	\textbf{4.785}	&	7.793	& 11.1&	0.038	&	 -	\\
\hline
\hline
\mrd{Heuristic}\\
\hline
$K=1$	&	1.724	         &	5.250	&	7.526	&	 9.6	& 3.259	&		0	\\
$K=2$	&	1.810	         &	5.388	&	7.798	&	9.7	& 7.303	&		0	\\
$K=3$	&	1.803	         &	5.502	&	7.686	&	9.9	& 11.711	&		0.011	\\
$K=4$	&	\textbf{1.862}	         &	5.541	&	\emph{7.909}	& 9.8	&	16.110	&		0.160	\\
$K=5$	&	1.840	 &	5.620	&	7.763	& 9.9	&	20.493	&		0.247	\\
\hline
\end{tabular}
\end{table}

\begin{table}[htb]
\caption{Out-of-sample performances for FTSE100 ($n=83$) with a \mr{12 weeks} rebalancing applying the \mrd{heuristic procedure}.}
\label{Table_7}
\centering
\small
\begin{tabular}{|c||c|c|c|c|c|c|}
\hline
   & AvReturn ($\cdot 10^{-3}$) & V-Out ($\cdot 10^{-4}$)  & Sharpe ($\cdot 10^{-2}$) &  MeanAssets & MeanTime (sec.) &  MRE ($\%$) \\
\hline
\hline
Market	        &	0.762	&	6.656	&	2.954   & 83	& -	   &	 -    \\
\hline
\hline
Markowitz	&	1.852	&	4.339	&	8.892	& 12.7 &  0.042	&	-\\
\hline
\hline
RMT 	        &	1.577	&	4.333	&	7.578   & 14.4 & 0.046	&	 -	\\
\hline
Power Mapping	&	1.749	&	4.091	&	8.646	&  16.4 & 0.047	&	-	\\
\hline
\hline
\mrd{Heuristic}\\
\hline
$K=1$	&	\textbf{2.406}	&	4.015	&	\textbf{\emph{12.008}}	& 12.7	&	7.939	&		0	\\
$K=2$	&	2.149	&	\textbf{3.951}	&	10.811	&	13.3 	& 15.569	&		0.074	\\
$K=3$	&	1.876	&	4.176	&	9.179	& 13.2	&	22.999	&		0.206	\\
$K=4$	&	1.899	&	4.147	&	9.325	& 13.0	&	30.215	&		0.465	\\
$K=5$	&	1.992	&	4.107	&	9.830	& 13.2	&	37.339	&		0.498	\\
\hline
\end{tabular}
\end{table}

\newpage

\begin{table}[htb]
\caption{Out-of-sample performances for SP500 ($n=442$) with a \mr{12 weeks} rebalancing applying the \mrd{heuristic procedure}.}
\label{Table_8}
\centering
\small
\begin{tabular}{|c||c|c|c|c|c|c|}
\hline
   & AvReturn ($\cdot 10^{-3}$) & V-Out ($\cdot 10^{-4}$)  & Sharpe ($\cdot 10^{-2}$) &  MeanAssets & MeanTime (sec.) &  MRE ($\%$) \\
\hline
\hline
Market	    &	1.292	&	7.544	&	4.704      &	 442	    & -	&	 	 -	\\
\hline
\hline
Markowitz	    &	1.560	&	3.603	&	8.220	    &	16.4	 & 0.230	&		-\\
\hline
\hline
RMT 	&	1.619	&	\textbf{3.347}	&	8.847	    &	21.2	 & 0.263	&		-\\
\hline
Power Mapping	&	1.506	&	3.398	&	8.170	    &	21.8	 & 69.195	&		-\\
\hline
\hline
\mrd{Heuristic}\\
\hline
$K=1$	&	1.273	&	3.707	&	6.611	& 17.6	&	9.018	&		0	\\
$K=2$	&	1.202	&	3.828	&	6.144	& 17.4	&	17.641	&		0.113\\
$K=3$	&	1.364	&	4.008	&	6.814	& 17.3	&	26.040	&		0.097	\\
$K=4$	&	1.472	&	4.250	&	7.142	& 17.8	&	34.249	&		0.518	\\
$K=5$	&	\textbf{1.860}	&	4.265	&	\textbf{\emph{9.006}}	& 18.1	&	42.331	&		-0.743\\
\hline
\end{tabular}
\end{table}

\noindent We can see in Tables \ref{Table_5}-\ref{Table_8} that the performance results of our heuristic are in line with the ones obtained for the exact model. As expected, the computational times have been dramatically reduced while the relative error is on average no larger than the $1\%$. This shows the effectiveness of the heuristic procedure. We also note that in Table \ref{Table_8} for $K=5$ the heuristic was able to find solutions with a better value than the values provided by the exact model \Mr{(see the negative MRE)}, pointing out that the lager the value of parameter $K$, the harder is to find an optimal solution with the exact approach. Indeed, the computational times needed to solve the exact model grow exponentially with $K$, while this growth is linear in the case of the heuristic. Regarding the two versions of the heuristic described in Section 3.2, the CPU times reported in the tables have been obtained by implementing the Version 1 for DJIA and EUROSTOXX50 datasets, and the Version 2 for FTSE100 and SP500. Version 2 is therefore more suitable for large datasets in our setting. The CPU time needed to run the heuristic seems reasonable (less than one minute on average), and indeed it is lower than the one required by the Power Mapping filtering technique in the case of Table \ref{Table_8}. We conjecture that this increase in the computational time of the Power Mapping method is due to the fact that the correlation matrix after the transformation may be indefinite. To conclude this analysis of our heuristic, we note that the performance measures of the heuristic in Tables \ref{Table_5}-\ref{Table_8} sometimes improved the corresponding results of the exact method in Tables \ref{Table_1}-\ref{Table_4}. The possible capacity of the heuristic of avoiding \emph{overfitting} effects explains this fact.

\noindent Finally, to emphasize the out-of-sample performance of our approach, in the following figures we show the weekly out-of-sample portfolio values. For each dataset, $K$ is the number of filtered observations corresponding to the values in bold in column ``AvReturn'' in Tables \ref{Table_1}-\ref{Table_4}. \mrd{The red and dark blue lines report the weekly values of the portfolios obtained with our MIQP model (``Scenario Filtering'') and our heuristic algorithm, respectively.} Note that in Fig. 1 (c) the red and dark blue lines coincide since, as pointed out in Section 3.2, the solutions of the exact and heuristic methods coincide for $K=1$. From our computational experiments, one can conclude that our exact and heuristic algorithms outperform the alternative filtering methods on the analyzed real-world datasets.

\newpage

%\begin{comment}
\begin{figure}[ht]
  \subfloat[DJIA $K=5$]{
	\begin{minipage}[c][1\width]{
	   0.49\textwidth}
	   \centering
	   \includegraphics[width=1.15\textwidth]{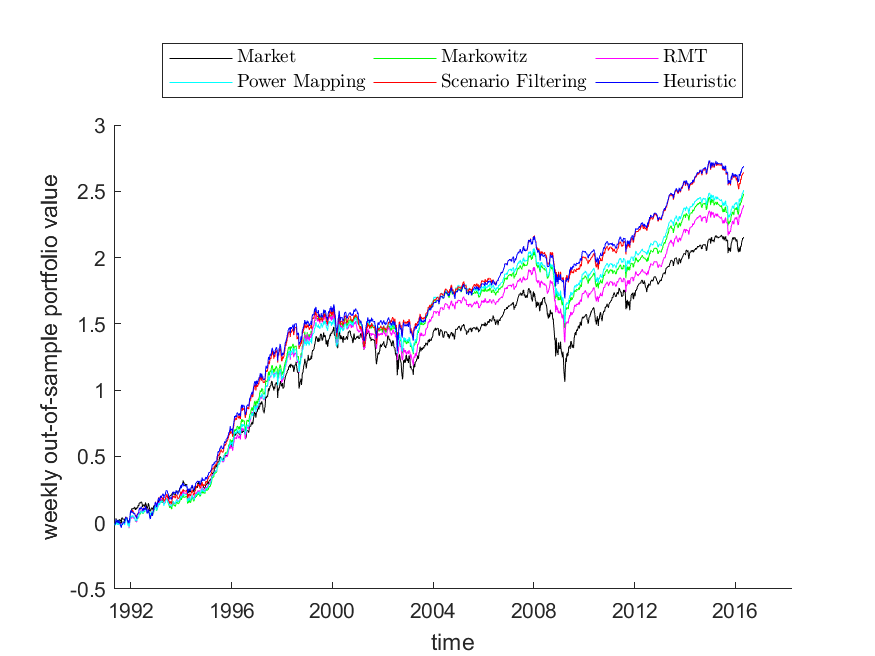}
	\end{minipage}}
 \hfill 	
  \subfloat[EUROSTOXX $K=5$]{
	\begin{minipage}[c][1\width]{
	   0.49\textwidth}
	   \centering
	   \includegraphics[width=1.15\textwidth]{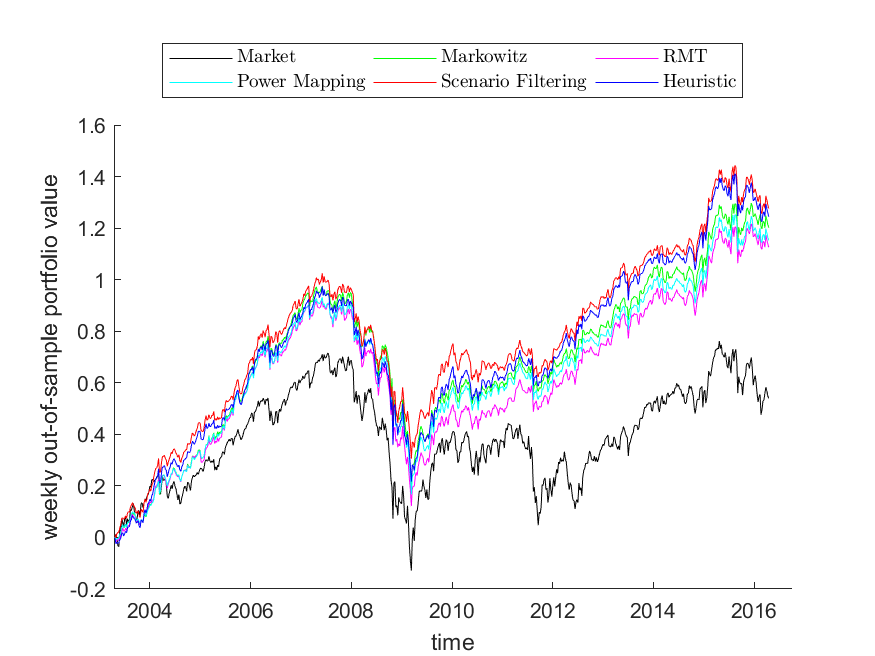}
	\end{minipage}}
\hfill
  \subfloat[FTSE100 $K=1$]{
	\begin{minipage}[c][1\width]{
	   0.49\textwidth}
	   \centering
	   \includegraphics[width=1.15\textwidth]{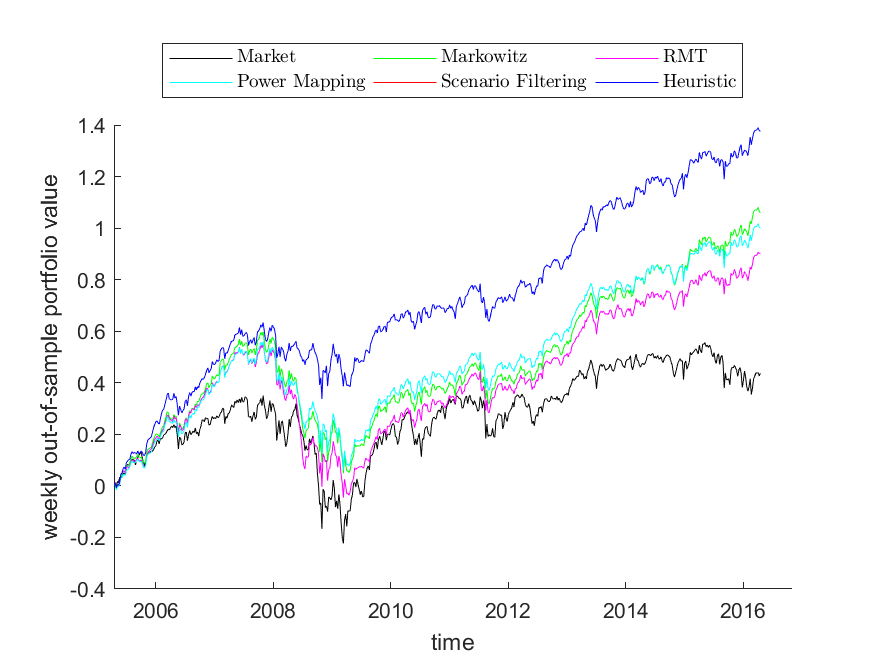}
	\end{minipage}}
 \hfill	
  \subfloat[SP500 $K=5$]{
	\begin{minipage}[c][1\width]{
	   0.49\textwidth}
	   \centering
	   \includegraphics[width=1.15\textwidth]{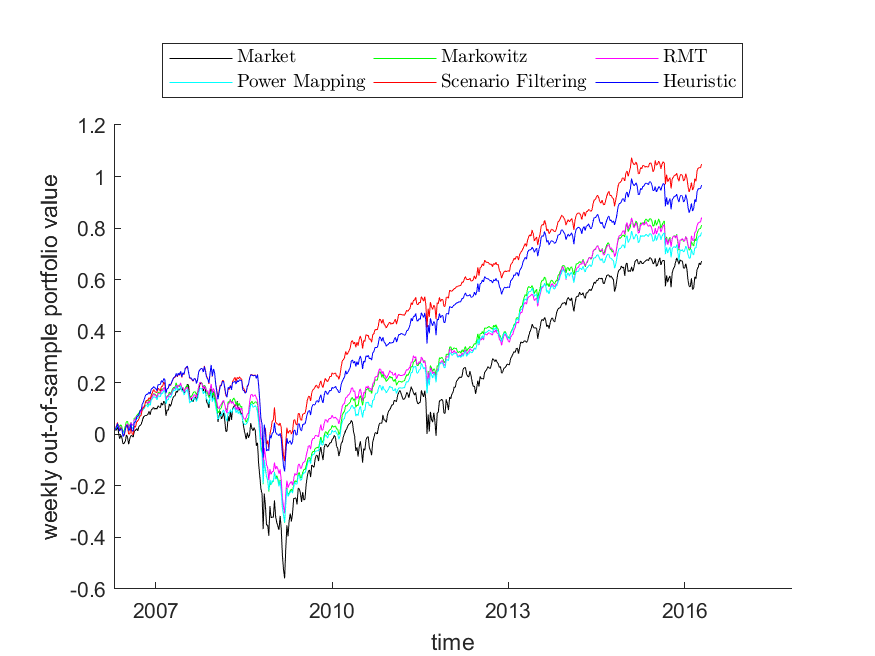}
	\end{minipage}}
\caption{Weekly out-of-sample portfolio values for the four datasets.}\label{Figure_OUT}
\end{figure}
%\end{comment}

\section{Conclusions}
Due to the finiteness of sample size of financial datasets, covariance matrices typically incorporate a huge amount of noise. Therefore, their use in portfolio selection maybe misleading. In this paper we consider some popular filtering procedures provided in the literature and compare them with a new approach based on combinatorial optimization. Our method is able to eliminate observations (outliers) in order to lower the in-sample variance and obtaining a good out-of-sample performance of the portfolios. We present a MIQP model and apply it to some real-world financial datasets. We show that our new combinatorial optimization approach to filtering is effective in hitting the goal of eliminating noise in the observed data. From a computational viewpoint, the two possible (exact and heuristic) strategies are able to find optimal or near optimal solutions in reasonable times even for large size financial datasets.

\begin{acknowledgements}
\JP{This research has been partially supported by Spanish Ministry of Education and Science/FEDER grant number  MTM2016-74983-C02-(01-02), and projects FEDER-US-1256951, CEI-3-FQM331 and  \textit{NetmeetData}: Ayudas Fundaci\'on BBVA a equipos de investigaci\'on cient\'ifica 2019.}
\end{acknowledgements}

\bibliographystyle{elsarticle-harv}

\begin{thebibliography}{99}


\bibitem{Bienstock1996} Bienstock, D.: Computational study of a family of mixed-integer quadratic programming problems. Mathematical Programming 74, 121--140 (1996)


\bibitem{BilFa1997}
Billionnet, A., Faye, A.: A lower bound for a constrained quadratic $0$-$1$ minimization problem. Discrete Applied Mathematics 74, 135--146 (1997)

\bibitem{BorHam2002}
Boros, E., Hammer, P.L.: Pseudo-boolean optimization. Discrete Applied Mathematics 123, 155--225 (2002)

\bibitem{BruCeScoTar_2017}
Bruni, R., Cesarone, F., Scozzari, A., Tardella, A.: On exact and approximate stochastic dominance strategies for portfolio selection. European Journal of Operational Research 259, 322--329 (2017)

%\bibitem{BruCeScoTar_2016}
%Bruni, R., Cesarone, F., Scozzari, A., Tardella, F.: Real-world datasets for portfolio selection and solutions of some stochastic dominance portfolio models. Data in Brief 8, 858--862 (2016)

\mrm{\bibitem{CeScoTar_2013}
Cesarone, F., Scozzari, A., Tardella, F.: A new method for mean-variance portfolio optimization with cardinality constraints. Annals of Operations Research 205, 213--234 (2013)}

\bibitem{CeScoTar2020}
Cesarone, F., Scozzari, A., Tardella, F.: An optimization-diversification approach to portfolio selection. Journal of Global Optimization, 76, 245--265 (2020)

%\bibitem{ChopraZiemba_2013}
%Chopra, V. K., Ziemba, W. T.: The effect of errors in means, variances, and covariances on optimal portfolio choice. In Handbook of the Fundamentals of Financial Decision Making: Part I, pp. 365-373 (2013)

\bibitem{ElKaroui2013}
El Karoui, N.: On the Realized Risk of High-Dimensional Markowitz Portfolios. SIAM Journal of Financial Mathematics 4, 737--783 (2013).

\bibitem{EltGru1995}
Elton, E.J., Gruber, M.J.: Modern Portfolio Theory and Investment Analysis. John Wiley $\&$ Sons (1995)

\bibitem{EltGruSpi2006}
Elton, E.J., Gruber, M.J., Spitzer, J.: Improved estimates of correlation coefficients and their impact on optimum portfolios. European Financial Management 12, 303--318 (2006)

\bibitem{Fama1965}
Fama, E.: The behavior of stock-market prices. Journal of Business 38, 34--105 (1965)

\bibitem{FlouVis_1995}
Floudas, C. A., Visweswaran V.: Quadratic optimization. Handbook of global optimization (pp. 217-269). Springer, Boston, MA, 1995.

\mrm{\bibitem{GareyJohnson1979}
Garey, M. R., Johnson, D. S.: Computers and intractability (Vol. 174). San Francisco: freeman. (1979)}

\bibitem{GuhrKal_2003}
Guhr, T., K\"{a}lber, B.: A new method to estimate noise in financial correlation matrices. Journal of Physics A 36, 3009--3032 (2003)

\bibitem{JegTit_2001}
Jegadeesh, N., Titman, S.: Profitability of momentum strategies: An evaluation of alternative explanations. The Journal of Finance 56, 699--20 (2001)

\bibitem{Jorion1986}
Jorion, P.: Bayes-Stein estimation for portfolio analysis. Journal of Financial and Quantitative Analysis 21, 279--292 (1986)

\bibitem{JansenVries_1991}
Jansen, D.W., de Vries, C.G.: On the frequency of large stock returns: putting booms and busts into perspective. The Review of Economics and Statistics 73, 18--24 (1991)

\bibitem{Kondor_etal_2007}
Kondor, I., Pafka, S., Nagy, G.: Noise sensitivity of portfolio selection under various risk measures. Journal of Banking $\&$ Finance 31, 1545--1573 (2007)

\bibitem{laloux_etal_2000}
Laloux, L., Cizeau, P., Bouchaud, J.P., Potters, M.: Random matrix theory and financial correlations. International Journal of Theoretical and Applied Finance 3, 391--397 (2000)

\bibitem{LedoitWolf2003}
Ledoit, O., Wolf, M.: Improved estimation of the covariance matrix of stock returns with an application to portfolio selection. Journal of Empirical Finance 10, 603--621 (2003)

\bibitem{Mandelbrot1963}
Mandelbrot, B.B.: The variation of certain speculative prices. Journal of Business 36, 392--417 (1963)

\bibitem{ManOgrySpe_2007}
Mansini, R., Ogryczak, W., Speranza, M. G.: Conditional value at risk and related linear programming models for portfolio optimization. Annals of operations research 152, 227--256  (2007)

\bibitem{Markowitz1952}
Markowitz, H.: Portfolio selection. The Journal of Finance 7, 77--91 (1952)

\bibitem{McCormick1976} McCormick, G.P.: Computability of global solutions to factorable nonconvex programs: Part I -- Convex underestimating problems. Mathematical Programming 10, 147--175 (1976)

\bibitem{NemhauserWolsey1988}
Nemhauser, G.L., Wolsey, L.A.: Integer and Combinatorial Optimization. Wiley (1988)

\bibitem{Onnela_et_al_2004}
Onnela, J. P., Kaski, K., Kertesz J.: Clustering and information in correlation based financial networks. The European Physical Journal B 38, 353--362 (2004)

\bibitem{PafkaKondor2002}
Pafka, S., Kondor, I.: Noisy covariance matrices and portfolio optimization. The European Physical Journal 27, 277--280 (2002)

\bibitem{PafkaKondor2003}
Pafka, S., Kondor, I.: Noisy covariance matrices and portfolio optimization II. Physica A 319, 487--494 (2003)

\bibitem{PafkaKondor2004}
Pafka, S., Kondor, I.: Estimated correlation matrices and portfolio optimization. Physica A 343, 623--634 (2004)

\bibitem{PuerMadSco2020}
Puerto, J., Rodr\'{i}guez-Madrena, M., Scozzari, A.: Clustering and portfolio selection problems: A unified framework. Computers $\&$ Operations Research 117, 104891 (2020)

\bibitem{RockafellarUryasev_2002}
Rockafellar, R. T., Uryasev, S.: Conditional value-at-risk for general loss distributions. Journal of banking $\&$ finance 26, 1443--1471 (2002)

\bibitem{Schafer_etal_2010}
Schafer, R., Nilsson, N.F.,  Guhr, T.: Power mapping with dynamical adjustment for improved portfolio optimization. Quantitative Finance 10, 107--119 (2010)

\bibitem{Sharpe1966}
Sharpe, W.F.: Mutual fund performance. Journal of Business 39, 119--138 (1966)

\bibitem{Sharpe1994}
Sharpe, W.F.: The sharpe ratio. The Journal of Portfolio Management 21, 49--58 (1996)

\bibitem{Stoyanov_etal_2011}
Stoyanov, S.V., Rachev, S.T., Racheva-Yotova, B., Fabozzi, F.J.: Fat-tailed models for risk estimation. The Journal of Portfolio Management, 37, 107--117 (2011)

\bibitem{Tola_et_al_2008}
Tola, V., Lillo, F., Gallegati, M., Mantegna, R.N.: Cluster analysis for portfolio optimization. Journal of Economic Dynamics and Control 32, 235--258 (2008)

\bibitem{WattsStrogatz_1998}
Watts, D.J., Strogatz, S.H.: Collective dynamics of small-worldnetworks. Nature 393(6684), 440 (1998)

\bibitem{WolseyInteger1998}
Wolsey, L. A.: Integer programming. John Wiley \& Sons
(1998)

%\bibitem{Pozzi_et_al_2013}
%Pozzi, F., Di Matteo, T., Aste, T.: Spread of risk across financial markets: better to invest in the peripheries. Scientific Reports 3, 1665 (2013)

% Format for reports

\end{thebibliography}

\end{document}